\newtheorem{theorem}{Theorem}[section]
\newtheorem{corollary}[theorem]{Corollary}
\newtheorem{proposition}[theorem]{Proposition}
\newtheorem{remark}[theorem]{Remark}
\newtheorem{example}[theorem]{Example}
\newcommand{\dG}{\overrightarrow{G}}
\begin{document}

\copyrightyear{2022}
\copyrightclause{Copyright for this paper by its authors.
  Use permitted under Creative Commons License Attribution 4.0
  International (CC BY 4.0).}

\conference{ITAT'24: Information technologies -- Applications and Theory, September 20--24, 2024, Drienica, Slovakia}

\title{Forbidden paths and cycles in the undirected underlying graph of a 2-quasi best match graph}
\author{Annachiara Korchmaros}[
orcid=0000-0002-7334-669X,
email=annachiara.korchmaros@uni-leipzig.de,
url=https://akorchmaros.com/,
]

\address{Bioinformatics Group, Department of Computer Science \&
  Interdisciplinary Center for Bioinformatics, Universit{\"a}t Leipzig,
  H{\"a}rtelstra{\ss}e~16--18, D-04107 Leipzig, Germany.}

\begin{abstract} 
The undirected underlying graph of a 2-quasi best match graph (2-qBMG) is proven not to contain any induced graph isomorphic to $P_6$ or $C_6$. This new feature allows for the investigation of 2-BMGs further by exploiting the numerous known results on $P_6$ and $C_6$ free graphs together with the available polynomial algorithms developed for their studies. In this direction, there are also some new contributions about dominating bicliques and certain vertex decompositions of the undirected underlying graph of a 2-qBMG.  
\end{abstract}

\begin{keywords} Phylogenetic combinatorics, quasi best match graphs, $P_6$- and $C_6$-free graphs, dominating set, vertex decomposition. 
\end{keywords}

\maketitle

\section{Introduction}
Quasi-best match graphs are directed vertex-colored graphs explained by phylogenetic trees.  
A tree-free characterization is known so far for the case of two colors~\cite{korchmaros2023quasi}:   
A {\emph{two-color quasi best match graph}} (2-qBMG) is a bipartite directed graph $\overrightarrow{G}$ without loops and parallel edges which has the following three properties:
\begin{itemize}
\item[(N1)] if $u$ and $v$ are two independent vertices then there exist no vertices $w,t$ such that $ut,vw,tw$ are edges;
\item[(N2)] bi-transitive, i.e., if $uv,vw,wt$ are edges then $ut$ is also an edge;
\item[(N3)] if $u$ and $v$ have common out-neighbor then either all out-neighbors of $u$ are also out-neighbors of $v$ or
all out-neighbors of $v$ are also out-neighbors of $u$.
\end{itemize}
A 2-qBMG is a \emph{two-colored best match graph} (2-BMG) if it is sink-free, i.e., no vertex has empty out-neighborhood, and it is a \emph{reciprocal two-best match graph} (reciprocal 2-BMG) if all its edges are symmetric. 
Moreover, 2-qBMGs but not 2-BMGs form a hereditary class~\cite{korchmaros2023quasi}.

2-BMGs, 2-qBMGs, and more generally best match graphs 
have been the subjects of intensive studies, also motivated by their relevant roles in the current investigation of orthology predictions from sequence similarity in the case that the gene families histories are free from gene transfers; see~\cite{ramirez2024revolutionh, schaller2021corrigendum,korchmaros2021structure, korchmaros2023quasi, schaller2021heuristic, schaller2021complexity, schaller2021complete, hellmuth2024theory}. A major contribution ~\cite[Theorem 3.4]{schaller2021complexity} is a characterization of 2-qBMGs by three types of forbidden induced subgraphs, called of types $(F_1),(F_2)$ and $(F_3)$; see Section~\ref{bg}.        

This investigation focuses on specific properties of the underlying undirected graph of a 2-qBMG. Actually, such undirected graphs have been touched so far only marginally in the study of 2-qBMGs except in the very special case of reciprocal 2-BMGs; see~\cite{manuela2020reciprocal,hellmuth2020complexity}. 
The main motivation for this study is the richness of the literature on undirected graphs. It is indeed much richer than on directed graphs, with plenty of fundamental works on forbidden configurations, decompositions, and vertex-colorings, as well as on algorithms and valuations on computational complexity. The aim is to exploit some of these deeper results on undirected graphs to gain new insights into 2-qBMGs. It may happen, however, that going back to a digraph $\overrightarrow{G}$ from its underlying undirected graph $G$, a deep result on $G$ only has a very limited, or even trivial impact on $\overrightarrow{G}$. Thus, the structural relationship between $\overrightarrow{G}$ and $G$ is far to be immediate. 
For instance, \cite[Theorem 3.4]{schaller2021complexity} does not yield an analog characterization in terms of forbidden induced subgraphs of the underlying undirected graph. 
This depends on the fact that the underlying undirected graph of a digraph of type $(F_i)$, $1\le i \le 3$ with only required edges is either a path of length $4$ or $5$ but it is not necessarily a forbidden subgraph of the underlying undirected graph of a 2-qBMG. 

Therefore, the challenge is to find appropriate results and knowledge on undirected graphs that may produce relevant contributions to the study of 2-qBMGs via their underlying undirected graphs.

The main results in this direction are Theorem~\ref{thm:P6free} and Theorem~\ref{thm:C6free}, which state that the underlying undirected graph of every 2-qBMG is $P_6$-free and $C_6$-free, that is, both $P_6$ and $C_6$ are forbidden induced subgraphs. This result is sharp as the underlying undirected graph of a 2-qBMG may have an induced path $P_5$ (and hence induced $P_4$ as well); see Theorem~\ref{thm:P5}. Therefore, the underlying undirected graphs of a type $(F_i)$ digraph, $1\le i \le 3$, are not forbidden subgraphs for the underlying undirected graph of 2-qBMGs.

In their seminal paper~\cite{bacsotuza1} appeared in 1993, Bacs\'o and Tuza pointed out the strong relationship between $P_k$-freeness and dominating subsets in undirected graphs. Relevant contributions on such a relationship were given in several papers, especially in~\cite{bacsotuza2,brandstadt2006p6,van2010new,liu1994dominating,liu2007characterization}. In particular, for any bipartite graph $G$,  Liu and Zhou proved that $G$ is both $P_6$-free and $C_6$-free if and only if every connected induced subgraph of $G$ has a dominating biclique; see~\cite[Theorem 1]{liu1994dominating}. An independent constructive proof of the Liu-Zhou theorem is due to P. van't Hof and D. Paulusma~\cite{van2010new} where the authors also provide an algorithm that finds such a dominating biclique of a connected $P_6$-free graph in polynomial time. For a discussion on the computational complexity of the problem of finding (not necessarily dominating) bicliques in undirected bipartite graphs, see~\cite{peeters2003maximum}. An upper bound on the number of bicliques of $C_6$-free undirected bipartite graphs in terms of the sizes of their color classes $U,W$ is given in~\cite[Theorem 3.3]{prisner2002} where it is shown that such number does not exceed $|U|^2 |W|^2$.   

Recognition and optimization problems for both $P_6$ and $C_6$-free bipartite undirected graphs and certain decompositions involving $K\oplus S$ graphs have been studied by several authors, following the paper \cite{fouquet1999bipartite}. In particular, it is shown that the class of both $P_6$ and $C_6$-free bipartite graphs can be recognized in linear time. Also, efficient solutions for two NP-hard problems are presented in this class of graphs: the maximum balanced biclique problem and the maximum independent set problem. For more details, see the recent paper~\cite{quaddoura2024bipartite}. 

A contribution to the study of the vertex decomposition problem for 2-qBMGs in smaller connected 2-qBMGs of type (A) is given in Theorem~\ref{thm:connected-typeA}, where a connected 2-qBMG is of type (A) if its underlying undirected graph is a $K\oplus S$ graph, that is, it has a vertex decomposition into a (dominating) biclique $K$ and a stable set $S$, that is, any two vertices in $S$ are independent.

\section{Background}\label{bg}

The notation and terminology for undirected graphs are standard. Let $G$ be an undirected graph with vertex-set $V(G)$ and edge-set $E(G)$. 
In particular, a path $P_n$ of an undirected graph $G$ is a sequence $v_1v_2\cdots v_n$ of pairwise distinct vertices of $G$ such that $v_iv_{i+1}\in E(G)$ for $i=1,\ldots,n-1$. A $P_n$ \emph{path-graph} is an undirected graph on $n$ vertices with a path $v_1v_2\cdots v_n$ containing no chord, i.e. any edge in $E(G)$ is $v_iv_{i+1}\in E(G)$ for some $1\le i < n$. A $P_n$ path-graph with a path $v_1v_2\cdots v_n$ containing no chord is a bipartite graph with (uniquely determined) color sets $\{v_1,v_3,\ldots\}$ and $\{v_2,v_4,\ldots\}$. An undirected graph is  $P_n$-\emph{free} if it has no induced subgraph isomorphic to a $P_n$ path-graph. A {\emph{cograph}} is a $P_4$-free graph.  

A cycle $C_n$ of an undirected graph $G$ is a sequence $v_1v_2\cdots v_nv_1$ of pairwise distinct vertices of $G$ such that $v_iv_{i+1}\in E(G)$ for $i=1,\ldots,n-1$, and $v_nv_1\in E(G)$.
A $C_n$ \emph{cycle-graph} is an undirected graph on $n$ vertices with a cycle $v_1v_2\cdots v_nv_1$ containing no chord, i.e. any edge in $E(G)$ is either $v_iv_{i+1}\in E(G)$ for some $1\le i \le n-1$, or $v_nv_1$. For $n$ even, a $C_n$ cycle-graph with a cycle $v_1v_2\cdots v_nv_1$ containing no chord is a bipartite graph with (uniquely determined) color classes $\{v_1,v_3,\ldots\}$ and $\{v_2,v_4, \ldots\}$. Bipartite cycle-graphs can only exist for even $n$. An undirected graph is $C_n$-\emph{free} if it has no induced subgraph isomorphic to an $C_n$ cycle-graph. 

A set $D\subseteq V(G)$ is a \emph{dominating set} of an undirected graph $G$ if, for any vertex $u\in V(G)\setminus D$, there is a vertex $v\in D$ such that $uv\in E(G)$. We also say that $D$ dominates $G$. A subgraph $H$ of $G$ is a \emph{dominating subgraph} of $G$ if the vertex set of $H$  dominates $G$.

An undirected graph $G$ is of type $K\oplus S$ if either $G$ is degenerate, i.e. if it has an isolated vertex, or there is a partition of $V(G)$ into two sets: a biclique set $K$ and a stable set $S$; see~\cite{fouquet1999bipartite,quaddoura2024bipartite}. 

For directed graphs $\overrightarrow{G}$ with vertex-set $V(\dG)$ and edge-set $E(\dG)$, in particular for 2-qBMGs, notation and terminology come from~\cite{korchmaros2023quasi}. In particular, $N^+(v)$ and $N^-(v)$ stand for the set of out-neighbours and in-neighbours of $v$ in $\dG$,  
and $v$ is a {\emph{sink}} when $N^+(v)=\emptyset$, and $v$ is a source when $N^-(v)=\emptyset$. 
A digraph $\dG$ is \emph{oriented} if $uv\in E(\dG)$ implies $vu\notin E(\dG)$ for any $u,v\in V(\dG)$. An oriented digraph has a {\emph{topological vertex ordering}} if its vertices can be labeled with $v_1, v_2, \ldots$ such that for any $v_iv_j\in E(\dG)$ we have $i<j$. A sufficient condition for an oriented digraph to have a topological ordering is to be \emph{acyclic}, that is, there is no directed cycle in the digraph. An {\emph{orientation}} of a digraph $\dG$ is a digraph obtained from $\dG$ by keeping the same vertex set but retaining exactly one edge from each symmetric edge. From \cite[Lemma 2.2]{korchmaros2021structure} and \cite[Theorem 3.8]{korchmaros2021structure}, any orientation of a 2-qBMG is acyclic whenever at least one of the following conditions are satisfied:
\begin{itemize}
\item[($*$)] no two (or more than two) symmetric edges of $\overrightarrow{G}$ have a common endpoint;
\item[($**$)] no two (or more than two) vertices of $\overrightarrow{G}$ are equivalent, i.e. no two vertices have the same in- and out-neighbors.
\end{itemize}
Acyclic-oriented digraphs are odd–even graphs. For a pair $(\mathfrak{A}, \mathfrak{O})$ where $\mathfrak{A}$ is a finite set of non-negative even integers and $\mathfrak{O}$ is a set of positive odd integers, the associated odd–even oriented digraph $\dG$ has vertex-set $\mathfrak{A}$ and edge-set with $ab\in E(\dG)$ when both $\frac{1}{2}
(a+b)$ and $\frac{1}{2}(b-a)$ belong to $\mathfrak{O}$. This oriented bipartite graph has color classes $U=\{a:a \equiv 0 \pmod 4, a \in \mathfrak{A}\}$ and $W=\{a:a \equiv 2 \pmod 4, a \in \mathfrak{A}\}$. 
An oriented bipartite digraph is a {\emph{bitournament}} if for any two vertices $u,v \in V$ with different colors, either $uv \in E(\dG)$, or $uv \in E(\dG)$. A bi-transitive bitournament is an odd-even digraph; see \cite[Proposition 3.9]{korchmaros2021structure}.

In addition, for four vertices $x_1,x_2,x_3,y$ of $\dG$, we say that $[x_1,x_2,x_3,y]$ is an (N1)-configuration if  $x_1x_2,x_2x_3,$ $yx_3 \in E(\dG)$ but either $x_1y\in E(\dG)$ or $yx_1\in E(\dG)$; in other words when condition (N1) holds for $u=x_1,t=x_2,w=x_3,v=y$ and therefore $x_1$ and $y$ are not independent. 

A 2-qBMG is {\emph{degenerate}} if it has an isolated vertex. We stress that a non-degenerate 2-qBMG may be {\emph{trivial}}, as it may be the union of pairwise disjoint (possible symmetric) edges. If this is the case, then (N1), (N2), and (N3) trivially hold in the sense that none of the conditions required in (N1), (N2), and (N3) is satisfied. Also, a directed graph is said to be $P_n$-free or $C_n$-free if its undirected underlying graph is $P_n$-free or $C_n$-free,  respectively. 

Let $\overrightarrow{G}_4$ denote a bipartite digraph on four vertices where $V(\overrightarrow{G}_4) =\{x_1, x_2, y_1, y_2\}$ and the color classes are $\{x_1,x_2\}$ and $\{y_1,y_2\}$. Then $\dG_4$ is of {\emph{type}} $(F_1)$ {\emph{with required edges}} if $E(\dG_4)=\{x_1y_1,y_2x_2,y_1x_2\}$.whereas $\dG_4$ is of {\emph{type}} $(F_2)$ {\emph{with required edges}} if $E(\dG_4)=\{x_1y_1,y_1x_2,x_2y_2\}$. The undirected underlying graph $G_4$ of a $\overrightarrow{G}_4$ of type either $(F_1)$ or $(F_2)$ is a path of length $4$.

Let $\overrightarrow{G}_5$ denote a bipartite graph on five vertices where $V_5(\dG_5) =\{x_1, x_2, y_1, y_2,y_3\}$ and the color classes are $\{x_1,x_2\}$ and $\{y_1,y_2,y_3\}$. Then $\dG_5$ is of {\emph{type}} $(F_3)$ if {\emph{with required edges}} $E(\dG_5)=\{x_1y_1,x_2y_2,x_1y_3,x_2y_3\}$. The undirected underlying graph $G_5$ of $\overrightarrow{G}_5$ of type $(F_3)$ is a path of length $5$. 

Let $\dG_1$ and $\dG_2$ be two directed graphs on the same vertex set $V$. Then $\dG_1\cong \dG_2$, that is, $\dG_1$ and $\dG_2$ are isomorphic, if there exists a edge-preserving permutation $\varphi$ on $V$, i.e. $v_1v_2\in E(\dG_1)$ if and only if $\varphi(v_1)\varphi(v_2)\in E(\dG_2)$.  

A tree $T$ is \emph{phylogenetic} if every node is either a leaf or has at least two children. $T$ is a \emph{rooted} tree if one of the nodes is chosen as \emph{root} denoted by $\rho_T$. In a rooted tree, its root is typically drawn as the top node, and the edges are directed from the parent nodes to their child nodes. In this contribution, all trees are rooted and phylogenetic.
Let $(T,\sigma)$ be a leaf-colored tree with leaf set $L$, set of colors $S$, leaf-coloring surjective map $\sigma: L\rightarrow S$, and rooted at $\rho_T$. For any two leaves $x,y\in V$, $lca(x,y)$ denotes the last common ancestry between $x$ and $y$ on $T$. $T$ is \emph{phylogentic} if all nodes have at least two children except the leaves.
A leaf $y \in L$ is a best match of the leaf $x\in L$ if $\sigma(x)\ne\sigma(y)$ and $lca(x,y) \preceq lca(x,z)$, i.e. $lca(x,z)$ is an ancestor of $lca(x,y)$, holds for all leaves $z$ of color $\sigma(y) =\sigma(z)$. The {\emph{BMG (best match graph) explained by}} $(T,\sigma)$ is the directed graph whose vertices are the leaves of $T$ where $xy\in E(\dG)$ if $y$ is a best match of $x$ is a vertex-colored digraph with color set $S$. 

A truncation map $u_T : L\times S \rightarrow T$ assigns to every leaf $x \in L$ and color $\sigma \in S$ a vertex of T such that $u_T(x,s)$ lies along the unique path from $\rho_T$ to $x$ and that $u_T(x,\sigma(x)) = x$. A leaf $y\in  L$ with color $\sigma(y)$ is a quasi-best match for $x \in L$ (with respect to $(T,\sigma)$ and $u_T$) if both conditions (i) and (ii) are satisfied: (i) $y\,$ is a best match of $x$ in $(T,\sigma)$, (ii) $lca_T(x,y) \preceq u_T(x,\sigma(y))$. The digraph $qBMG(T,\sigma,u_T)$ is the vertex-colored digraph $qBMG(T,\sigma,u_T)$ on the vertex set $L$ whose edges are defined by the quasi-best matches. A vertex-colored digraph $(\dG,\sigma)$ with vertex set $L$ is a {\emph{$|S|$-colored quasi-best match graph (|S|-qBMG)}} if there is a leaf-colored tree $(T,\sigma)$ together with a truncation map $u_T$ on $(T,\sigma)$ such that $(\dG,\sigma) = qBMG(T,\sigma,u_T)$. For general results on quasi-best match graphs, the reader is referred to~\cite{korchmaros2023quasi}.

\section{Forbidden induced path graphs of underlying undirected 2-qBMGs}
\label{secindu}
The main result in the paper which establishes a new property of 2-qBMGs is given in the following theorem. 
\begin{theorem}\label{thm:P6free}
The underlying undirected graph of a 2-qBMG is $P_6$-free.
\end{theorem}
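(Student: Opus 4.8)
The plan is to argue by contradiction, ruling out an induced $P_6$ by analysing the possible orientations of its edges in the underlying digraph. Suppose the underlying undirected graph $G$ of a 2-qBMG $\overrightarrow{G}$ contains an induced $P_6$ path-graph on vertices $v_1v_2v_3v_4v_5v_6$. Since $G$ is bipartite, the two colour classes of this subgraph are forced to be $\{v_1,v_3,v_5\}$ and $\{v_2,v_4,v_6\}$, and, the $P_6$ being induced, $\overrightarrow{G}$ has \emph{no} edge --- in either direction --- joining any of the pairs $\{v_1,v_4\}$, $\{v_2,v_5\}$, $\{v_3,v_6\}$ (the pair $\{v_1,v_6\}$ is a non-edge as well, but will not be needed). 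For each $i$ the edge $v_iv_{i+1}$ is present in $\overrightarrow{G}$ in at least one direction, and I record its orientation by a symbol $e_i\in\{R,L,S\}$: $R$ for $v_i\to v_{i+1}$ only, $L$ for $v_{i+1}\to v_i$ only, $S$ for symmetric. Listing the path vertices in the reverse order $v_6v_5\cdots v_1$ describes the same induced subdigraph with orientation $(\mathrm{flip}(e_5),\dots,\mathrm{flip}(e_1))$, where $\mathrm{flip}$ swaps $R$ and $L$ and fixes $S$; so it suffices to exclude each orientation \emph{up to this reversal symmetry}.

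The first step is to discard most orientations using (N2) and (N1) only. By bi-transitivity (N2), a directed path $v_i\to v_{i+1}\to v_{i+2}\to v_{i+3}$ forces $v_i\to v_{i+3}$, which is impossible for $i\in\{1,2,3\}$, and likewise for the reversed orientation; hence no three consecutive path edges may be co-oriented, which already rules out the all-symmetric orientation among many others. By (N1), the pattern ``$v_i\to v_{i+1}$ and $v_{i+3}\to v_{i+2}$'' is impossible for $i\in\{1,2,3\}$: together with the edge $v_{i+1}v_{i+2}$, in whichever direction it occurs, the vertices $v_i,v_{i+1},v_{i+2},v_{i+3}$ form an (N1)-configuration, forcing $v_i$ and $v_{i+3}$ to be adjacent. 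A short case check --- organised, say, by the orientation $e_3$ of the central edge, where $e_3=R$ forces successively $e_5=R$ (by (N1)), $e_4=L$ (by (N2)), $e_2=L$ (by (N1)), with $e_1$ free, the case $e_3=L$ being the mirror image and $e_3=S$ surviving for no choice of the other $e_i$ --- shows that the only orientations compatible with the above are, up to reversal symmetry, $RLRLR$, $LLRLR$ and $SLRLR$.

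It remains to eliminate these three orientations, and here I would invoke (N3). In each of them there is a vertex $v_k$, with $k=3$ or $k=4$, that is a local sink of the $P_6$, i.e. $v_{k-1}\to v_k$ and $v_{k+1}\to v_k$, while in addition $v_{k-1}$ and $v_{k+1}$ have one further out-neighbour inside the $P_6$, namely $v_{k-2}$ and $v_{k+2}$ respectively. Since $v_{k-1}$ and $v_{k+1}$ share the out-neighbour $v_k$, (N3) forces $N^+(v_{k-1})$ and $N^+(v_{k+1})$ to be $\subseteq$-comparable; if $N^+(v_{k-1})\subseteq N^+(v_{k+1})$ then $v_{k+1}\to v_{k-2}$, and if $N^+(v_{k+1})\subseteq N^+(v_{k-1})$ then $v_{k-1}\to v_{k+2}$, and in both cases this is an edge on one of the pairs $\{v_1,v_4\},\{v_2,v_5\},\{v_3,v_6\}$, contradicting their non-adjacency. (Equivalently, each of these orientations already induces, on the vertices $v_2,\dots,v_6$, a subdigraph of type $(F_3)$, forbidden by \cite[Theorem~3.4]{schaller2021complexity}.) Since no orientation survives, $G$ has no induced $P_6$; the result is sharp because $P_5$ can occur, cf.\ Theorem~\ref{thm:P5}.

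The step I expect to be the main obstacle is the case reduction to the three surviving orientations. The delicate part is the bookkeeping around symmetric edges: an edge of type $S$ behaves as ``both $R$ and $L$'', so it can trigger several of the (N2)/(N1) obstructions at once, and one must verify that no orientation outside the short list slips through. One must also check, for each surviving orientation, that a suitable local sink with the required additional out-neighbour really exists, and that the out-neighbourhood containment coming from (N3) lands precisely on one of the three forbidden non-edges --- equivalently, that the claimed induced $(F_3)$ is indeed induced, all of its non-edges being among those of the $P_6$.
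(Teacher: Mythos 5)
Your proposal is correct and follows essentially the same route as the paper: a contradiction argument that uses (N1) and (N2) to prune the possible orientations of the five path edges (exploiting the non-adjacency of $v_i$ and $v_{i+3}$), and then kills the surviving orientation(s) with exactly the same (N3) violation --- $v_3$ and $v_5$ sharing the out-neighbour $v_4$ while having private out-neighbours $v_2$ and $v_6$. The only difference is bookkeeping: you organize the cases by the central edge with an explicit $R/L/S$ encoding and reversal symmetry, whereas the paper cases on the orientation of the first two edges at $v_2$; your reduction to the three orientations $RLRLR$, $LLRLR$, $SLRLR$ checks out.
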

\begin{proof}
Let $\overrightarrow{G}$ denote a 2-qBMG with at least six vertices. Assume on the contrary that its underlying undirected graph $G$ has an induced subgraph $G_6$ on six vertices $v_1,v_2,v_3,v_4,v_5,v_6$ such that $v_1v_2v_3v_4v_5v_6$ is a $P_6$ path-graph: Then $G_6$ contains no chord other than $v_iv_{i+1}$ for $i=1,\ldots,5$. Four cases arise according to the possible patterns of the neighborhood of $v_2$ in $\dG$.

(i): $v_1v_2,v_3v_2\in E(\dG)$. Then $v_3v_4\in E(\dG),$ otherwise $[v_4,v_3,v_2,v_1]$ is an (N1)-quadruple.  
If $v_4v_5\in E(\dG)$ then $v_6v_5\in E(\dG)$, otherwise $v_3v_4v_5v_6$ violates (N2). On the other hand, $v_4v_5,v_6v_5\in E(\dG)$ violates (N1) as  $[v_3,v_4,v_5,v_6]$ is an (N1)-configuration. Hence $v_5v_4\in E(\dG)$. If $v_6v_5\in E(\dG)$ then $[v_6,v_5,v_4,v_3]$ is an (N1)-configuration. We are left with the case $v_1v_2,v_2v_3,v_3v_4,v_5v_4,v_5v_6\in E(\dG)$, as shown in Figure~\ref{fig:P6free}.

\begin{figure}[ht]
\centering
\scalebox{0.1}
{\includegraphics{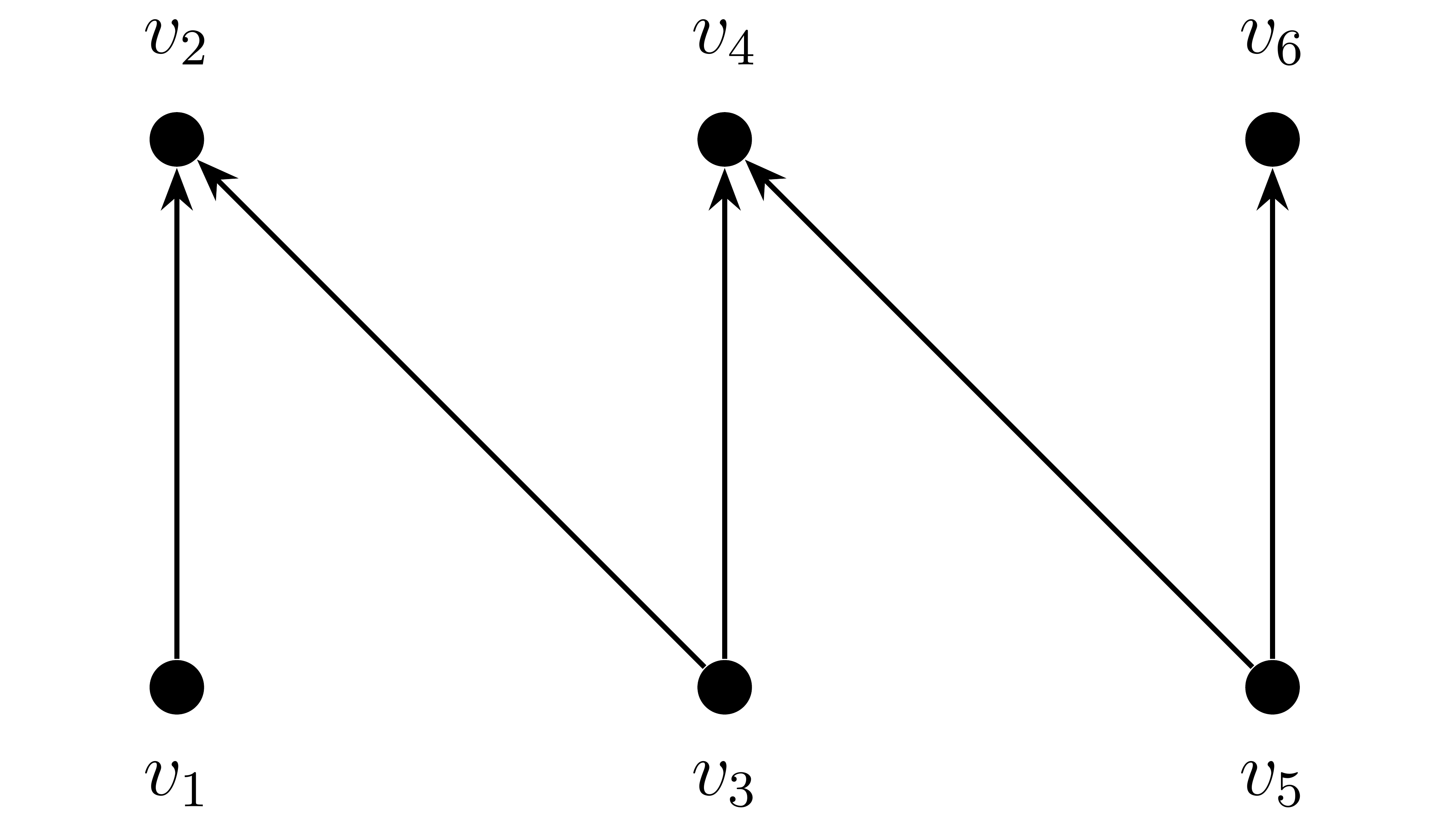}}
\caption{Case (i): all edges of $\dG$ up to possible symmetric edges.}
\label{fig:P6free}
\end{figure}

\noindent
In this case, (N3) does not hold for $v_3$ and $v_5$ since $v_4\in N^+(v_3)\cap N^+(v_5)$ whereas $v_6\in N^+(v_5)\setminus N^+(v_3)$ and $v_2\in N^+(v_3)\setminus N^+(v_5)$. 

(ii): $v_1v_2,v_2v_3\in E(\dG)$. Then $v_4v_3\in E(\dG)$, otherwise $v_1v_2v_3v_4$ violates (N2). On the other hand, $v_4v_3\in E(\dG)$ yields that $[v_1,v_2,v_3,v_4]$ is an (N1)-configuration, a contradiction. 

(iii): $v_2v_1,v_2v_3\in E(\dG)$. Suppose $v_4v_3\in E(\dG)$. Then (N3) yields $v_5v_4\in E(\dG)$ since  $v_3\in N^+(v_2)\cap N^+(v_4)$ and $v_1\in N^+(v_2)\setminus N^+(v_4)$. On the other hand, if  $v_5v_4\in E(\dG)$ then 
$[v_5,v_4,v_3,v_2]$ is an (N1)-configuration, a contradiction. Hence $v_3v_4\in E(\dG)$.
If $v_4v_5\in E(\dG)$ then $[v_2,v_3,v_4,v_5]$ is an (N1)-configuration, a  contradiction.
On the other hand, if $v_5v_4\in E(\dG)$ then $v_2v_3v_4v_5$
with $v_2 v_5\notin E(\dG)$ which contradicts (N2).

(iv): $v_2v_1,v_3v_2\in E(\dG)$. Then $v_3v_4\in E(\dG)$, otherwise $v_4v_3v_2v_1$ together with  $v_4v_1\notin E(\dG)$ violate (N2). Suppose that $v_5v_4\in E(\dG)$. Then 
(N3) yields $v_6v_5\in  E(\dG)$, since $v_4\in N^+(v_3)\cap N^+(v_5)$ and $v_2\in N^+(v_3)\setminus N^+(v_5)$. However, $v_6v_5\in  E(\dG)$ contradicts (N1), as it yields that   $[v_6,v_5,v_4,v_3]$ is an (N1)-configuration. Therefore we have $v_4v_5\in E(\dG)$. If   $v_6v_5\in E(\dG)$ then $[v_3,v_4,v_5,v_6]$ is an (N1)-configuration, a contradiction. On the other hand, if $v_5v_6\in E(\dG)$, $v_3v_4v_5v_6$ with $v_3v_6\notin E(\dG)$ violates (N2).
\end{proof}

Theorem~\ref{thm:P6free} implies that the underlying undirected graph of any 2-qBMG is $P_k$ free for $k\ge 6$, and also poses the problem of $P_k$-freeness for $3\le k\le 5$.  

\begin{theorem}\label{thm:P5} 
There exist 2-qBMGs on five vertices which are not $P_5$-free. More precisely,           
there are exactly six non-isomorphic 2-qBMGs on five vertices $\{v_1,v_2,v_3,v_4,v_5\}$ whose underlying undirected graph is a $P_5$ path-graph with edge-sets are:

\noindent
$E(\overrightarrow{P}_5^{(a)})=\{v_1v_2,v_3v_2,v_3v_4,v_4v_5\},$\\
$E(\overrightarrow{P}_5^{(b)})=\{v_1v_2,v_3v_2,v_3v_4,v_5v_4\},$\\
$E(\overrightarrow{P}_5^{(c)})=\{v_2v_1,v_3v_2,v_3v_4,v_4v_5\},$\\
$E(\overrightarrow{P}_5^{(a1)})= \{v_1v_2,v_2v_1,v_3v_2,v_3v_4,v_4v_5\}, $\\
$E(\overrightarrow{P}_5^{(b1)})=\{v_1v_2,v_2v_1,v_3v_2,v_3v_4,v_5v_4\},$\\
$E(\overrightarrow{P}_5^{(ab)})=\{v_1v_2,v_2v_1,v_3v_2,v_3v_4,v_4v_5,v_5v_4\}.$
\end{theorem}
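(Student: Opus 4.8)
The plan is to proceed in two directions: first show that each of the six listed orientations is indeed a 2-qBMG whose underlying graph is $P_5$, and then show that no other orientation of $P_5$ can be a 2-qBMG, which will give exactly six non-isomorphic examples. For the existence direction, I would either exhibit an explicit leaf-colored tree $(T,\sigma)$ together with a truncation map $u_T$ for each of the six digraphs, or, more economically, verify directly that each digraph satisfies the tree-free characterization (N1), (N2), (N3). Since each of these digraphs has at most six edges on five vertices, checking (N1)--(N3) is a short finite case analysis: (N2) requires checking all directed paths of length $3$, (N3) requires checking all pairs of vertices with a common out-neighbor, and (N1) requires checking all pairs of independent vertices. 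I expect the bipartition to be forced as $\{v_1,v_3,v_5\}$ and $\{v_2,v_4\}$ (the unique $2$-coloring of a chordless $P_5$), so ``independent vertices'' in (N1) means pairs within a color class.

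For the completeness direction, I would fix the underlying graph to be the chordless path $v_1v_2v_3v_4v_5$ and ask which orientations of its five edges yield a 2-qBMG. The key structural tool is the same case analysis already used in the proof of Theorem~\ref{thm:P6free}: the constraints (N1), (N2), (N3) heavily restrict how consecutive edges may be oriented. I would organize the argument around the orientation of the edge $v_2v_3$ and the pattern at $v_2$ (i.e., whether $v_1v_2$ or $v_2v_1$, and whether $v_2v_3$ or $v_3v_2$), mirroring cases (i)--(iv) of Theorem~\ref{thm:P6free} but now with the path truncated at $v_5$ instead of continuing to $v_6$. In several of those cases the contradiction in the $P_6$ proof came from the existence of $v_6$; here, without $v_6$, those branches survive and produce precisely the surviving orientations. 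The new twist is that here symmetric edges are allowed, so at the edge $v_2v_3$ (and only there, since symmetry on pendant or other edges would create forbidden configurations, which must be ruled out) one may have both $v_2v_3$ and $v_3v_2$ present; this is what distinguishes the ``$a$'' family from the ``$a1$''/``$b1$''/``$ab$'' family.

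Concretely, I would carry out the steps in this order. Step 1: record that the $2$-coloring of chordless $P_5$ is unique, so any orientation is automatically a bipartite digraph with color classes $\{v_1,v_3,v_5\}$, $\{v_2,v_4\}$, and note that for the digraph to have no isolated vertex each of the five edges must be oriented in at least one direction. Step 2: for a single pendant edge such as $v_1v_2$, argue it cannot be symmetric together with certain orientations of $v_2v_3$ by invoking (N1) or (N2) on the triple involving $v_3$; more generally determine for each edge whether symmetry is permissible. Step 3: do the case split on the orientation at $v_2$, using (N2) on $v_1v_2v_3v_4$ and $v_2v_3v_4v_5$, (N1) on the (N1)-configurations $[v_1,v_2,v_3,v_4]$, $[v_2,v_3,v_4,v_5]$, and their reversals, and (N3) on pairs like $v_2,v_4$ or $v_3,v_5$ when they share an out-neighbor, exactly as in Theorem~\ref{thm:P6free}; each admissible branch yields one of the six edge-sets. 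Step 4: collect the surviving digraphs and check they are pairwise non-isomorphic (this is immediate from counting edges and from the position of the symmetric edge and of the ``in-out'' pattern along the path). Step 5: conclude by the existence direction that all six are realizable, completing the proof.

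The main obstacle I anticipate is Step 2 together with the bookkeeping in Step 3: one must be careful that the allowed symmetric edge really is only $v_2v_3$ (and its mirror image $v_3v_4$ under the path automorphism $v_i \mapsto v_{6-i}$), and that no other combination of symmetric edges slips through; this requires systematically ruling out symmetry on $v_1v_2$, on $v_2v_3$ combined with $v_3v_4$, and so on, by producing in each forbidden case an explicit violation of (N1), (N2), or (N3). The case analysis is elementary but must be exhaustive, and the symmetry $v_i \mapsto v_{6-i}$ of the path should be exploited to halve the work. Verifying that the six surviving digraphs are genuinely 2-qBMGs (the existence direction) is then the only remaining point, and it is cleanest via (N1)--(N3) rather than via explicit trees, since each digraph is so small that the conditions are vacuously or trivially satisfied on most triples.
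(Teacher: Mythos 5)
Your overall strategy is the same as the paper's: reuse the case analysis of Theorem~\ref{thm:P6free} to kill the orientations $v_1v_2,v_2v_3$ and $v_2v_1,v_2v_3$ at the start of the path, split on the remaining two patterns at $v_2$, then enumerate which additional (symmetric) edges can be adjoined and identify isomorphic duplicates via the reflection $v_i\mapsto v_{6-i}$. Your plan to also verify the existence direction by checking (N1)--(N3) on each of the six digraphs is a sensible addition; the paper delegates that to the remark following the theorem, where explaining trees are exhibited.

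However, there is a concrete error in your plan that would derail Steps 2--3 as written: you assert that the only edge of the path that may be symmetric is $v_2v_3$ (and its mirror $v_3v_4$), and that this is what separates $\overrightarrow{P}_5^{(a)}$ from $\overrightarrow{P}_5^{(a1)}$, $\overrightarrow{P}_5^{(b1)}$, $\overrightarrow{P}_5^{(ab)}$. The truth is exactly the opposite, and it is visible already in the statement you are proving: the symmetric pairs in the listed edge-sets are $v_1v_2/v_2v_1$ and $v_4v_5/v_5v_4$, i.e.\ the \emph{pendant} edges, while the middle edges are never symmetric. Indeed, adjoining $v_2v_3$ to, say, $E(\overrightarrow{P}_5^{(a)})$ creates the directed path $v_1v_2,v_2v_3,v_3v_4$, so (N2) would force the chord $v_1v_4$; adjoining $v_4v_3$ similarly produces an (N1)-configuration or an (N2) violation --- this is precisely the paper's closing observation that ``the above method does not work with $v_2v_3$ or $v_4v_3$.'' If you executed Step~2 under your stated expectation you would exclude the symmetric pendant edges, miss three of the six digraphs, and arrive at the wrong count. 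The framework is self-correcting if you genuinely attempt to exhibit a violation for each candidate symmetric edge (you will fail to find one for $v_1v_2/v_2v_1$ when $v_3v_2\in E(\dG)$), but as proposed the plan rests on a false premise. A minor additional slip: $P_5$ has four edges, not five.
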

\begin{proof} 
Let $\dG$ be a 2-qBMG on five vertices such that its underlying undirected graph $G$ has five pairwise distinct vertices, say $v_1,v_2,v_3,v_4,v_5$ where $v_1v_2v_3v_4v_5$ is a path containing no chord other than $v_iv_{i+1}$ for $i=1,\ldots,4$. The arguments in the proof of Theorem~\ref{thm:P6free} show that no 2-qBMG satisfies either $v_1v_2,v_2v_3\in E(\dG)$ or $v_2v_1,v_2v_3\in E(\dG)$. Therefore, two cases arise only according to the possible patterns of the neighborhood of $v_2$.

(i): $v_1v_2,v_3v_2\in E(\dG)$. The arguments in proof of Theorem~\ref{thm:P6free} show that $v_4v_3\notin E(\dG)$, and hence $v_3v_4\in E(\dG)$. Moreover either $v_4v_5 \in E(\dG)$ or $v_5v_4 \in E(\dG)$, and the arising digraphs are $\overrightarrow{P}_5^{(a)}$ and $\overrightarrow{P}_5^{(b)}$, respectively.
Adding edges with consecutive endpoints to $E(\overrightarrow{P}_5^{(a)})$ or $E(\overrightarrow{P}_5^{(b)})$ provide more non-isomorphic 2-qBMGs: If we add $v_2v_1$ creating a symmetric edge arises, we obtain two more non-isomorphic digraphs, named $\overrightarrow{P}_5^{(a1)}$ and $\overrightarrow{P}_5^{(b1)}$, respectively. Adding $v_5v_4$ to $E(\overrightarrow{P}_5^{(a)})$ or $v_4v_5$ to $E(\overrightarrow{P}_5^{(b)})$, the same 2-qBMG arises. Actually, it is isomorphic to $\overrightarrow{P}_5^{(b1)}$ by the map $\mu$ fixing $v_3$ and swapping $v_2$ with $v_4$, and $v_1$ with $v_5$. If we add $v_5v_4$ to $\overrightarrow{P}_5^{(a1)}$, or $v_4v_5$ to $\overrightarrow{P}_5^{(b1)}$, we obtain $\overrightarrow{P}_5^{(ab)}$.
On the other hand, the above method does not work with $v_2v_3$ or $v_4v_3$, since the arising digraph would not satisfy (N2).

(ii): $v_2v_1,v_3v_2\in E(\dG)$. The arguments in the proof of Theorem~\ref{thm:P6free} can also be used to show that $v_4v_3\notin E(\dG)$, whence $v_3v_4\in E(\dG)$ follows. Therefore, either $v_4v_5 \in E(\dG)$ or $v_5v_4 \in E(\dG)$. In the latter case, the digraph is isomorphic to $\overrightarrow{P}_5^{(a)}$ by the above map $\mu$. In the former case, $\overrightarrow{P}_5^{(c)}$ is obtained, which is not isomorphic to any of the 2-qBMGs already considered. Moreover, adding the edge $v_1v_2$ to $E(\overrightarrow{P}_5^{(c)})$ gives $\overrightarrow{P}_5^{(a1)}$. Therefore, no further non-isomorphic 2-qBMGs arise since adding either $v_2v_3$ or $v_4v_3$ would violate (N2).
\end{proof}

\begin{remark}
{\emph{In Theorem~\ref{thm:P5}, $\overrightarrow{P}_5^{(ab)}$ is a 2-BMG explained by the phylogenetic tree $(T,\sigma_p)$ in Figure~\ref{fig:P5free}, where $\sigma_p$ is the leaf-coloring defined according to the parity of labels of the leaves. Moreover, the other five 2-qBMGs in Theorem~\ref{thm:P5} are explained by $(T,\sigma_p,\tau)$ where $\tau$ is an appropriate sink-truncation map depending on the digraph.}}
\begin{figure}[ht]
\centering
\scalebox{0.1}
{\includegraphics{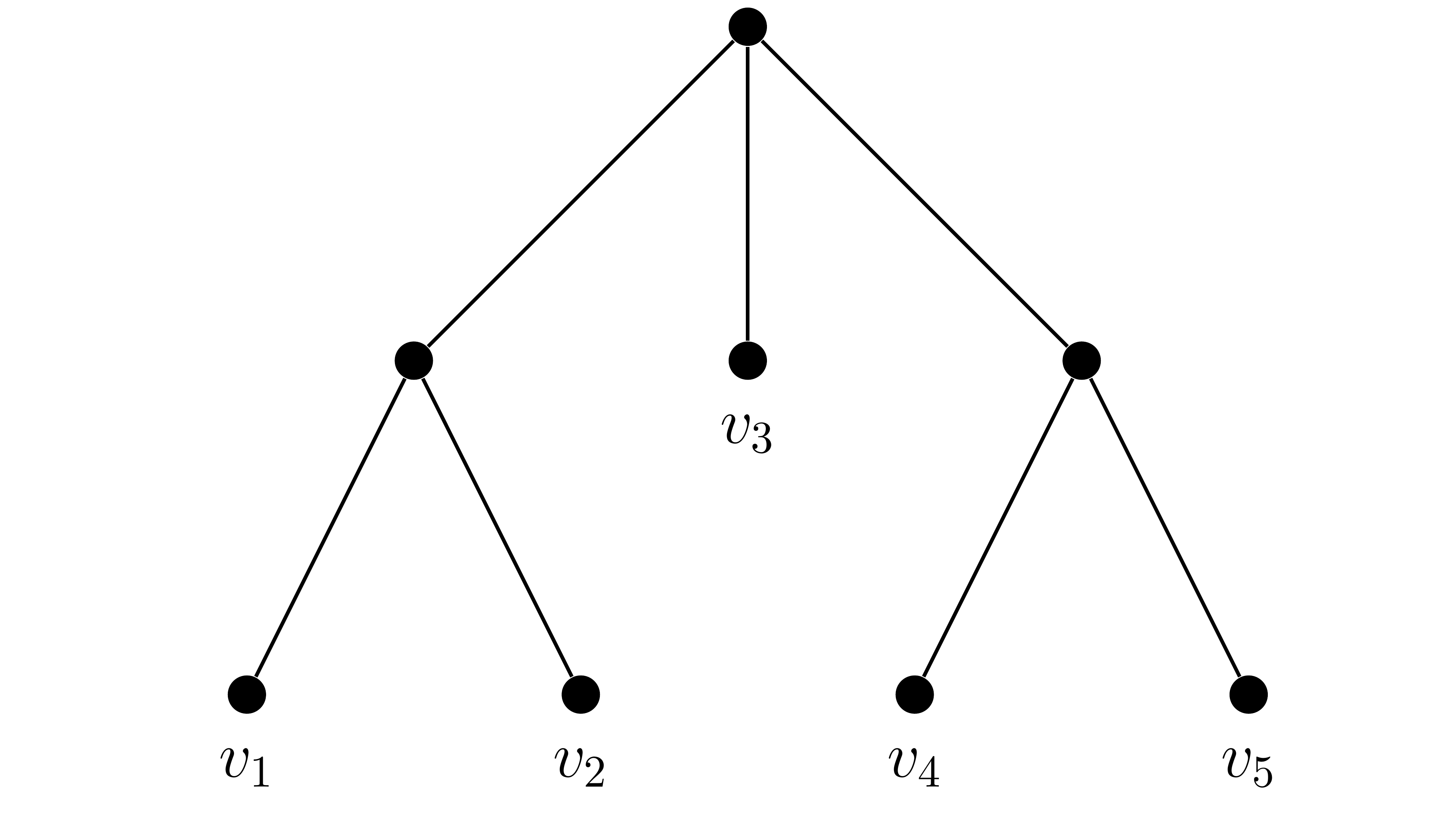}}
\caption{Tree topology explaining all digraphs in Theorem~\ref{thm:P5}. Leaves $v_1,v_3,v_5$ have the same color opposite to the color of leaves $v_2,v_4$.}
\label{fig:P5free}
\end{figure}
\end{remark}

\begin{example} {\emph{The 2-qBMG $\dG$ on $7$ vertices $\{v_1,v_2,\ldots,v_7\}$ and edge-set 
$$E(\dG)=\{v_5v_4,v_2v_1,v_3v_4,v_3v_2,v_4v_7,v_1v_6,v_3v_6,v_6v_1,v_7v_4\}$$ is an example of a larger 2-qBMG whose induced subgraph on $\{v_1,v_2,v_3,v_4,v_5\}$ coincides with $\overrightarrow{P}_5^{(a1)}$.}}
\end{example}
A case-by-case analysis similar but simpler to those in the proof of Theorem~\ref{thm:P5} gives the following result.

\begin{theorem} \label{thm:P4-cases}
There are exactly four non-isomorphic 2-qBMGs on four vertices $\{v_1,v_2,v_3,v_4\}$ whose underlying undirected graph is a $P_4$ path-graph: 

\noindent
$E(\overrightarrow{P}_4^{(1)})=\{v_1v_2,v_1v_4,v_2v_3\},$\\
$E(\overrightarrow{P}_4^{(2)})=\{v_1v_2,v_1v_4,v_3v_4\}, $\\
$E(\overrightarrow{P}_4^{(3)})=\{v_1v_2,v_1v_4,v_2v_1,v_2v_3\},$\\
$E(\overrightarrow{P}_4^{(4)})=\{v_1v_2,v_2v_1,v_4v_1,v_4v_3\}. $
\end{theorem}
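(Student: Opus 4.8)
The plan is to mimic the case analysis of Theorem~\ref{thm:P5} but over four vertices, which is strictly easier because there is one fewer ``free'' endpoint to orient. Let $\dG$ be a 2-qBMG on four vertices whose underlying undirected graph $G$ has vertices $v_1,v_2,v_3,v_4$ forming a path $v_1v_2v_3v_4$ with no chord other than $v_1v_2, v_2v_3, v_3v_4$. First I would observe, exactly as in the opening of the proof of Theorem~\ref{thm:P5}, that the arguments of Theorem~\ref{thm:P6free} already rule out the patterns $v_1v_2, v_2v_3 \in E(\dG)$ and $v_2v_1, v_2v_3 \in E(\dG)$ around $v_2$: in both cases one gets a violation of (N2) or an (N1)-configuration on $\{v_1,v_2,v_3,v_4\}$, since that sub-argument never used $v_5$ or $v_6$. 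By the left--right symmetry of the path (swap $v_1\leftrightarrow v_4$, $v_2\leftrightarrow v_3$), the analogous patterns around $v_3$ are also excluded. Hence at $v_2$ we must have $v_1v_2, v_3v_2 \in E(\dG)$ or $v_2v_1, v_3v_2 \in E(\dG)$, and symmetrically at $v_3$ we must have $v_2v_3$ \emph{not} present with $v_3v_4$ absent, i.e.\ the edge $v_3v_2$ is directed into $v_2$ and the edge between $v_3$ and $v_4$ is oriented so that $v_3$ is a ``source-like'' vertex on that side.

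Concretely I would split on the orientation of the two terminal edges $v_1v_2$ and $v_3v_4$ while fixing (from the above) that $v_3v_2 \in E(\dG)$ and, by the mirror argument, $v_3v_4 \in E(\dG)$ (with $v_4v_3$ possibly also present). The four base cases are then: (1) $v_1v_2, v_3v_2, v_3v_4 \in E(\dG)$ and none of $v_2v_1, v_4v_3$; (2) $v_2v_1, v_3v_2, v_3v_4 \in E(\dG)$ and none of $v_1v_2, v_4v_3$; (3) $v_1v_2, v_3v_2, v_3v_4, v_4v_3 \in E(\dG)$ with $v_2v_1$ absent; and the symmetric companion with both $v_1v_2, v_2v_1$. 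For each I would check (N1), (N2), (N3) directly on the four vertices; because $P_4$ has so few vertices, (N2) (which needs a path of length three) is only barely applicable and (N3) constrains how the two edges at $v_3$ and the edges at $v_2$ interact. This should leave precisely the four edge-sets in the statement after relabelling: one verifies that $E(\overrightarrow{P}_4^{(1)})=\{v_1v_2,v_1v_4,v_2v_3\}$ and $E(\overrightarrow{P}_4^{(2)})=\{v_1v_2,v_1v_4,v_3v_4\}$ are the two orientations with a single symmetric-free configuration (note these are written with the path read as $v_2 v_3 v_4 v_1$, i.e.\ $v_1$ plays the role of the middle-ish vertex), while $\overrightarrow{P}_4^{(3)}$ and $\overrightarrow{P}_4^{(4)}$ add exactly one symmetric edge in the two inequivalent ways.

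Next I would handle the ``saturation'' step as in Theorem~\ref{thm:P5}: starting from each base digraph, add any further edge with consecutive endpoints (creating a symmetric edge) and check whether (N1)--(N3) survive. As in the $P_5$ case, adding a symmetric partner to the ``inner'' edge incident to the degree-2 path vertices destroys (N2), while adding one to a terminal edge is sometimes allowed and produces $\overrightarrow{P}_4^{(3)}$ or $\overrightarrow{P}_4^{(4)}$; adding two would collapse to something already counted or again violate an axiom. Finally I would rule out isomorphisms among the four survivors by a degree/symmetric-edge invariant: $\overrightarrow{P}_4^{(1)}$ and $\overrightarrow{P}_4^{(2)}$ have no symmetric edge but differ in the direction pattern (in $\overrightarrow{P}_4^{(1)}$ the vertex of out-degree $2$ is adjacent to a sink-side vertex of out-degree $1$; in $\overrightarrow{P}_4^{(2)}$ it is adjacent to two sinks), $\overrightarrow{P}_4^{(3)}$ has its unique symmetric edge incident to the out-degree-$2$ vertex, and $\overrightarrow{P}_4^{(4)}$ has its symmetric edge at the opposite end; none of these profiles coincide under an edge-preserving permutation.

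The main obstacle I anticipate is purely bookkeeping rather than conceptual: making sure the case split on the four oriented edges is genuinely exhaustive and that the quotient by the path's mirror automorphism is applied consistently, so that I neither miss a legal orientation nor double-count two that are isomorphic. In particular I would be careful with (N3) when $v_3$ has out-neighbours on both sides ($v_2$ and $v_4$), since then (N3) forces a comparability between $N^+(v_3)$ and the out-neighbourhoods of $v_1$ or $v_4$ that is easy to overlook; that is exactly the step that kills the spurious orientations and leaves only the four listed. Since every needed sub-lemma about (N1)/(N2)/(N3)-configurations is already established in the proofs of Theorems~\ref{thm:P6free} and~\ref{thm:P5}, the write-up should be a short, self-contained checklist.
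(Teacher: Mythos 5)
The paper does not actually write out a proof of Theorem~\ref{thm:P4-cases} (it only remarks that the case analysis is ``similar but simpler'' to that of Theorem~\ref{thm:P5}), so your plan of redoing that case analysis on four vertices is the intended route. However, two of your intermediate claims are false and, followed literally, would produce the wrong list. The central error is the assertion that the arguments of Theorem~\ref{thm:P6free} rule out the pattern $v_2v_1,v_2v_3\in E(\dG)$ on four vertices ``since that sub-argument never used $v_5$ or $v_6$''. It did: case~(iii) of that proof obtains its contradictions only through $v_5$ (both the (N3) step producing $v_5v_4$ and the concluding (N1)/(N2) violations involve $v_5$), and on a chordless $P_4$ this pattern is in fact realizable --- in $\overrightarrow{P}_4^{(1)}$ and $\overrightarrow{P}_4^{(2)}$ the internal vertex $v_1$ has out-edges to both of its path-neighbours. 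Your reduction would therefore discard digraphs that appear in the statement itself. Writing the path as $w_1w_2w_3w_4$, the only constraints that survive on four vertices are: one cannot have both $w_1w_2$ and $w_2w_3$ in $E(\dG)$ (else either (N2) forces the chord $w_1w_4$ or (N1) is violated by $w_1w_2,w_2w_3,w_4w_3$ with $w_1,w_4$ independent), and, by the mirror symmetry of the path, one cannot have both $w_3w_2$ and $w_4w_3$ in $E(\dG)$. Moreover (N3) is vacuous here (whenever two vertices share an out-neighbour on a chordless $P_4$, one of them has out-neighbourhood of size one contained in the other's), so it does not do the work you expect of it. These two constraints admit exactly seven labelled orientations of the three edges, which fall into the four isomorphism classes of the statement under the mirror automorphism of the path.

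The second error is the saturation rule ``adding a symmetric partner to the inner edge destroys (N2)''. That is true for $P_5$, where it is used in the proof of Theorem~\ref{thm:P5}, but false for $P_4$: the digraph $\overrightarrow{P}_4^{(3)}$ has its unique symmetric edge precisely on the middle edge of the path, between the two degree-two vertices, and it satisfies (N1)--(N3); your rule would eliminate this class. In the other direction, your base case $\{v_1v_2,v_3v_2,v_3v_4,v_4v_3\}$ is not a 2-qBMG, since $v_4v_3,v_3v_2,v_1v_2\in E(\dG)$ with $v_1,v_4$ independent violates (N1). The repair is to abandon the appeal to cases (iii)--(iv) of Theorem~\ref{thm:P6free}, test the $3^3$ orientations of the three path edges directly against the two constraints above, and then quotient by the mirror map.
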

In Theorem~\ref{thm:P4-cases}, all the non-isomorphic 2-qBMGs on four vertices contain a sink. Therefore, the undirected underlying graph of a 2BMG is $P_4$-free since 2BMGs are sink-free 2-qBMG. 
\begin{corollary}\label{cor:un2BMG-cograph}
The underlying undirected graph of a 2BMG is a cograph.
\end{corollary}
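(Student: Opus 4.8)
The plan is to derive Corollary~\ref{cor:un2BMG-cograph} directly from the combination of Theorem~\ref{thm:P6free}, Theorem~\ref{thm:P4-cases}, and the definitional relationship between 2-BMGs and 2-qBMGs. Recall that a 2-BMG is precisely a sink-free 2-qBMG, and that a cograph is a $P_4$-free undirected graph. So the whole task reduces to showing that the underlying undirected graph $G$ of a sink-free 2-qBMG $\dG$ is $P_4$-free, which (since $P_k$-freeness for $k \ge 5$ is already implied by Theorem~\ref{thm:P6free} together with the hereditary remark after it, and $P_4$-freeness implies $P_k$-freeness for all $k \ge 4$) is actually the only nontrivial point. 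The key observation, already noted in the sentence immediately preceding the corollary, is that each of the four digraphs $\overrightarrow{P}_4^{(1)},\dots,\overrightarrow{P}_4^{(4)}$ listed in Theorem~\ref{thm:P4-cases} contains a sink.

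Concretely, first I would argue that if $G$ contained an induced $P_4$ path-graph on vertices $v_1v_2v_3v_4$, then the induced sub-digraph $\dG[\{v_1,v_2,v_3,v_4\}]$ would itself be a 2-qBMG: this uses that 2-qBMGs form a hereditary class (stated in the excerpt, citing~\cite{korchmaros2023quasi}), so every induced sub-digraph of $\dG$ is again a 2-qBMG, and its underlying undirected graph is exactly the induced $P_4$. Second, by Theorem~\ref{thm:P4-cases} this induced sub-digraph must be isomorphic to one of $\overrightarrow{P}_4^{(1)},\dots,\overrightarrow{P}_4^{(4)}$. Third, inspecting the four edge-sets one checks that each has a vertex with empty out-neighborhood: $v_3$ is a sink in $\overrightarrow{P}_4^{(1)}$ and in $\overrightarrow{P}_4^{(3)}$; $v_3$ is a sink in $\overrightarrow{P}_4^{(2)}$; and $v_3$ is a sink in $\overrightarrow{P}_4^{(4)}$ as well (the only outgoing edges present are from $v_1,v_2,v_4$). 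Hence the induced sub-digraph has a sink, so $\dG$ has a sink, contradicting the hypothesis that $\dG$ is a 2-BMG and therefore sink-free.

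Therefore $G$ is $P_4$-free; since $P_4$-freeness of an undirected graph is precisely the definition of a cograph given in Section~\ref{bg}, this completes the proof. I would also remark that Theorem~\ref{thm:P6free} is not strictly needed here, because $P_4$-freeness already implies $P_5$- and $P_6$-freeness; the role of Theorems~\ref{thm:P5} and~\ref{thm:P4-cases} is to pin down exactly which short induced paths can occur in a general 2-qBMG and to observe the sink phenomenon that forces their absence in the sink-free case.

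The main obstacle is essentially bookkeeping rather than conceptual: one must be careful that "having an induced $P_4$ in $G$" really does force the induced sub-digraph on those four vertices to be one of the four listed digraphs, i.e.\ that Theorem~\ref{thm:P4-cases} is an exhaustive classification up to isomorphism (including the choice of which of the two color classes the endpoints lie in, and all admissible edge orientations and symmetrizations). Once that exhaustiveness is accepted, verifying the presence of a sink in each of the four cases is immediate. A secondary subtlety is the degenerate/trivial cases flagged in the background section — but a 2-BMG is sink-free by definition, so it cannot be degenerate, and if its underlying undirected graph were trivial (a disjoint union of edges) it would already be $P_4$-free; hence these cases cause no trouble.
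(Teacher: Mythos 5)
Your reduction of the corollary to Theorem~\ref{thm:P4-cases} has a genuine gap at the step ``the induced sub-digraph has a sink, so $\dG$ has a sink.'' Being a sink is not inherited upward: a vertex whose out-neighborhood \emph{inside the four chosen vertices} is empty may perfectly well have out-neighbors elsewhere in $\dG$, so the sink-freeness of the ambient 2-BMG is not contradicted. Theorem~\ref{thm:P4-cases} only shows that a 2-BMG on \emph{exactly} four vertices cannot have an underlying $P_4$ path-graph; it says nothing about induced $P_4$'s sitting inside a larger sink-free 2-qBMG. (The paper's own one-sentence justification preceding the corollary makes exactly the same leap, so you have faithfully reproduced its argument --- but the argument is not valid.)

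In fact the statement itself is contradicted elsewhere in the paper. The remark following Theorem~\ref{thm:P5} asserts that $\overrightarrow{P}_5^{(ab)}$, with edge set $\{v_1v_2,v_2v_1,v_3v_2,v_3v_4,v_4v_5,v_5v_4\}$, is a 2-BMG explained by a phylogenetic tree; it is sink-free (every $v_i$ has an out-neighbor), and it is explained, for instance, by the tree whose root has children $v_3$, $x$, $y$, where $x$ has children $v_1,v_2$ and $y$ has children $v_4,v_5$, with $v_1,v_3,v_5$ of one color and $v_2,v_4$ of the other. Its underlying undirected graph is the chordless path $v_1v_2v_3v_4v_5$, so $\{v_1,v_2,v_3,v_4\}$ induces a $P_4$ and the graph is not a cograph. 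The induced sub-digraph on these four vertices is a relabeling of $\overrightarrow{P}_4^{(4)}$ whose unique sink is $v_4$, and $v_4$ acquires the out-neighbor $v_5$ in the full digraph --- precisely the phenomenon your argument overlooks. Consequently no bookkeeping can close this gap: the corollary as stated is false, and would have to be weakened (e.g., restricted to 2-BMGs on four vertices, or to a hypothesis forcing the relevant vertices to remain sinks globally) before any proof along these lines could go through.
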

The $P_3$-freeness problem is solved in the following proposition. 
\begin{proposition}\label{prop:3vertices}
Every bipartite digraph on three vertices $\{v_1,v_2,v_3\}$ is a 2-qBMG.
\end{proposition}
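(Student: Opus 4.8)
The plan is to verify directly that every bipartite digraph $\dG$ on the vertex set $\{v_1,v_2,v_3\}$ satisfies the three tree-free conditions (N1), (N2), and (N3) recalled in the Introduction; since $\dG$ has no loops and no parallel edges, between each ordered pair of vertices there is at most one edge, so the underlying undirected graph $G$ has at most three edges. First I would note that $G$ contains no triangle (a triangle is not bipartite), hence $G$ is a subgraph of $P_3$: it is edgeless, or a single edge plus an isolated vertex, or a $P_3$ path-graph. In the first two cases $\dG$ is a degenerate or trivial 2-qBMG --- a union of pairwise disjoint (possibly symmetric) edges --- and, as observed in Section~\ref{bg}, (N1), (N2), (N3) hold trivially. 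So the whole argument reduces to checking that these conditions cannot be violated when only three vertices are available.

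The key observation is that (N1) and (N2) are genuinely \emph{four-vertex} conditions. For (N1): if $u,v$ are independent and $ut,vw,tw\in E(\dG)$, then looplessness gives $u\ne t$, $v\ne w$, $t\ne w$; moreover $u=w$ would make $vw=vu$ an edge (contradicting independence of $u$ and $v$), and $v=t$ would make $uv=ut$ an edge (same contradiction), while $u=t$ is a loop. Hence $u,v,w,t$ are pairwise distinct, which is impossible on three vertices, so (N1) holds vacuously. For (N2): if $uv,vw,wt\in E(\dG)$, looplessness gives $u\ne v$, $v\ne w$, $w\ne t$, and $u\ne t$ because $uv,vw,wu$ would be a directed triangle, contradicting bipartiteness; the remaining coincidences $u=w$ and $v=t$ both yield $ut\in\{wt,uv\}\subseteq E(\dG)$, so the required edge is already present, and otherwise $u,v,w,t$ are pairwise distinct, again impossible on three vertices. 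Thus (N2) holds as well (and it holds vacuously if one insists that the four vertices named in (N2) be distinct).

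Finally, for (N3): suppose $u,v$ have a common out-neighbour $w$, where we may assume $u\ne v$. Since $\dG$ is bipartite, $w$ lies in the color class opposite to both $u$ and $v$, so $u,v$ lie in one class $A$ and $w$ in the other class $B$; as $|A|\ge 2$ and $|A|+|B|=3$ with $|B|\ge 1$, we get $|A|=2$ and $B=\{w\}$. Hence $N^+(u),N^+(v)\subseteq\{w\}$ and both contain $w$, so $N^+(u)=N^+(v)=\{w\}$ and (N3) holds. Collecting the three verifications, every bipartite digraph on three vertices satisfies (N1), (N2), (N3), hence is a 2-qBMG. I expect the only delicate point to be the bookkeeping of coincidences among the vertices appearing in (N1) and (N2) --- one must make sure each such coincidence either is excluded by loopfreeness/bipartiteness or already forces the required edge; as an alternative one could simply enumerate the few bipartite digraphs on three vertices up to isomorphism and check each, but the structural argument above is cleaner, and the tree-free characterization makes an explicit tree-and-truncation-map construction unnecessary.
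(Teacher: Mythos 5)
Your proof is correct and follows essentially the same route as the paper: a direct verification of (N1), (N2), (N3), with (N3) reduced to the observation that on three vertices a common out-neighbour forces $N^+(u)=N^+(v)=\{w\}$. The only difference is that you spell out the coincidence bookkeeping for (N1) and (N2), which the paper dismisses as ``straightforward to check.''
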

\begin{proof}
It is straightforward to check that both (N1) and (N2) hold trivially for every bipartite digraph $\dG$ on three vertices. For (N3), it also holds trivially when $|E(\dG)|=1$. On the other hand, assign a vertex-coloring on $V(\dG)$ such that $v_1$ and $v_3$ have the same color. The only case when two vertices have at least one common out-neighbor is when $N^+(v_1)=N^+(v_3)=\{v_2\}$. Hence, (N3) holds also in the case when $|E(\dG)|>1$.  
\end{proof}
A straightforward consequence of Proposition~\ref{prop:3vertices} is a characterization of the 2-qBMGs on three vertices whose underlying undirected graph is a $P_3$ path-graph.
\begin{corollary}
The non-isomorphic 2-qBMGs on three vertices whose underlying undirected graph is a $P_3$ path-graph are all bipartite digraphs with at least two edges except for a symmetric edge.
\end{corollary}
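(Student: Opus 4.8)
The claim to establish is the final Corollary: the non-isomorphic 2-qBMGs on three vertices whose underlying undirected graph is a $P_3$ path-graph are exactly all bipartite digraphs with at least two edges, except for a symmetric edge. The plan is to combine Proposition~\ref{prop:3vertices} (every bipartite digraph on three vertices is a 2-qBMG) with a direct enumeration of the bipartite digraphs on $\{v_1,v_2,v_3\}$ whose undirected underlying graph is the path $v_1v_2v_3$ with no chord.

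\textbf{Step 1: translate the $P_3$ condition.} First I would observe that the underlying undirected graph being a $P_3$ path-graph forces the edge $v_1v_3$ to be absent in $\dG$ (no chord), and both undirected edges $\{v_1,v_2\}$ and $\{v_2,v_3\}$ to be present. Hence each of these two undirected edges is realized in $\dG$ by at least one directed edge, and $v_2$ is the center. Up to the symmetry swapping $v_1$ and $v_3$ (which preserves the class of 2-qBMGs, since (N1)--(N3) are invariant under graph isomorphism), I would record the finitely many choices for the orientation of each of the two edges: each can be a single arc in either direction, or a symmetric pair. This gives a small table of candidates; the symmetry $v_1\leftrightarrow v_3$ collapses the ``one arc each'' configurations appropriately.

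\textbf{Step 2: enumerate and discard the excluded case.} Next I would list the non-isomorphic outcomes: (a) $\{v_1,v_2\}$ a symmetric pair and $\{v_2,v_3\}$ a symmetric pair; (b) one symmetric pair and one single arc (two sub-cases for the arc direction, not isomorphic to each other once the symmetric side is fixed); (c) both edges single arcs (here the isomorphism $v_1\leftrightarrow v_3$ identifies ``both arcs pointing toward $v_2$'' with itself, ``both pointing away'' with itself, and the two ``one in, one out'' configurations with each other, giving three classes). Every such digraph has at least two directed edges, and none of them is a single symmetric edge (that would be only two vertices, or would not have a $P_3$ underlying graph). Conversely, a bipartite digraph on three vertices with at least two edges that is not a single symmetric edge must have its two edges realizing the two undirected edges of a path (if both edges were incident in a ``parallel'' way they would be a symmetric pair on two vertices, excluded), so its underlying graph is exactly $P_3$. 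Combining this with Proposition~\ref{prop:3vertices}, each listed digraph is a 2-qBMG, and these are precisely the claimed graphs.

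\textbf{Main obstacle.} There is no deep obstacle here — the statement is an immediate corollary of Proposition~\ref{prop:3vertices} together with a bookkeeping argument. The only point requiring care is the isomorphism reduction in Step 2: correctly identifying which orientation patterns coincide under the single non-trivial automorphism $v_1\leftrightarrow v_3$ of the underlying path, and verifying that the ``symmetric edge'' exclusion is stated so as to rule out exactly the degenerate two-vertex configuration rather than any of the genuine $P_3$ digraphs. I would double-check the final count against the three sub-cases (a), (b), (c) to make sure the phrase ``all bipartite digraphs with at least two edges except for a symmetric edge'' is the exact characterization and not an over- or under-count.
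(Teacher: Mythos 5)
Your proposal is correct and matches the paper's approach: the paper states this corollary as an immediate ("straightforward") consequence of Proposition~\ref{prop:3vertices}, with exactly the bookkeeping you carry out — the $P_3$ condition forces both undirected edges $\{v_1,v_2\},\{v_2,v_3\}$ to be realized by at least one arc each, and the only bipartite digraphs on three vertices with at least two arcs that fail this are the single symmetric edges. Your explicit enumeration of the six isomorphism classes is extra detail the paper omits, but it is consistent and correct.
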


The same setup and arguments from the proof of Theorem~\ref{thm:P6free} can be used to deal with induced $6$-cycles in 2-qBMGs since the proof of Theorem~\ref{thm:P6free} involves neither $v_1v_6$ nor $v_6v_1$. Therefore, the following result holds.
\begin{theorem}
\label{thm:C6free} The underlying undirected graph of any 2-qBMG is $C_6$-free.
\end{theorem}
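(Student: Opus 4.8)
The plan is to mimic the case analysis in the proof of Theorem~\ref{thm:P6free} almost verbatim, exploiting the crucial observation already made in the excerpt: the argument for $P_6$-freeness never invokes the edge $v_1v_6$ or $v_6v_1$, so it never uses that $v_1$ and $v_6$ are \emph{non}-adjacent. Consequently, running the same argument on an induced $C_6$ cycle-graph $v_1v_2v_3v_4v_5v_6v_1$ (where the only chords forbidden are the six cycle edges) should produce the same contradictions in every branch, with one genuinely new branch appearing in Case~(i) because there the $P_6$ proof terminates not with an (N1)- or (N2)-violation but with a violation of (N3) at $v_3,v_5$, and that final configuration must be re-examined now that $v_1$ and $v_6$ carry an edge between them.

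Concretely, I would first set up: let $\dG$ be a 2-qBMG whose underlying undirected graph $G$ contains an induced $C_6$ on $v_1,\dots,v_6$ with cycle edges $v_iv_{i+1}$ (indices mod $6$) and no other edges. Note that $G$ bipartite forces the two color classes to be $\{v_1,v_3,v_5\}$ and $\{v_2,v_4,v_6\}$, exactly as in the $P_6$ case plus the symmetric role of $v_6$ relative to $v_1$. Then split into the four cases according to the orientation pattern at $v_2$: (i) $v_1v_2,v_3v_2\in E(\dG)$; (ii) $v_1v_2,v_2v_3\in E(\dG)$; (iii) $v_2v_1,v_2v_3\in E(\dG)$; (iv) $v_2v_1,v_3v_2\in E(\dG)$. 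In cases (ii), (iii), (iv) the contradictions in the proof of Theorem~\ref{thm:P6free} are reached using only (N1), (N2), (N3) applied to subsets of $\{v_1,\dots,v_5\}$ (or $\{v_2,\dots,v_6\}$), and the non-adjacency of $v_1$ and $v_6$ is never used; hence those cases carry over unchanged, so I would simply say "the argument of Theorem~\ref{thm:P6free} applies verbatim" and move on.

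The one place needing real attention is Case~(i). There the $P_6$ proof funnels everything down to the orientation $v_1v_2,v_2v_3,v_3v_4,v_5v_4,v_5v_6\in E(\dG)$ and derives a contradiction from (N3) failing at $v_3,v_5$ (common out-neighbor $v_4$, but $v_6\in N^+(v_5)\setminus N^+(v_3)$ and $v_2\in N^+(v_3)\setminus N^+(v_5)$). That (N3)-violation does not depend on whether $v_1v_6$ is present, so it still applies. But I also have to re-examine the earlier forks in Case~(i): the proof first shows $v_3v_4\in E(\dG)$ (else $[v_4,v_3,v_2,v_1]$ is an (N1)-configuration — fine, uses only $v_1,v_2,v_3,v_4$), then rules out $v_4v_5$ (because $v_4v_5,v_6v_5$ gives the (N1)-configuration $[v_3,v_4,v_5,v_6]$, while $v_4v_5,v_5v_6$ violates (N2) on $v_3v_4v_5v_6$ — again only $v_3,\dots,v_6$), then rules out $v_6v_5$ (since $[v_6,v_5,v_4,v_3]$ would be (N1) — only $v_3,\dots,v_6$). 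All these forks use only four consecutive vertices on the cycle, so none of them can see the chord-status of $v_1v_6$; they go through identically. The remaining loose end is the orientation of the edge between $v_1$ and $v_6$ in the surviving subcase: but since that subcase is already contradictory via (N3) at $v_3,v_5$ irrespective of the $v_1$–$v_6$ orientation, nothing further is required. I expect no real obstacle here — the main (minor) care is bookkeeping the cyclic symmetry so that "the same argument" is visibly legitimate, and making sure that in each fork the four vertices invoked really are consecutive on $C_6$ so that the induced-subgraph hypothesis (no chords among them beyond cycle edges) supplies exactly the non-edges the (N1)/(N2) arguments need.

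Thus the write-up reduces to: restate the setup for $C_6$, observe the color classes, invoke the four-case split, note that cases (ii)–(iv) and all the forks of case (i) use only configurations on four consecutive vertices of the cycle and hence the proof of Theorem~\ref{thm:P6free} transfers, and conclude with the (N3)-at-$v_3,v_5$ contradiction in the last surviving subcase of case (i). This is essentially the one-sentence justification the excerpt already gives ("neither $v_1v_6$ nor $v_6v_1$" is used), fleshed out to confirm that every step indeed avoids the $v_1$–$v_6$ pair.
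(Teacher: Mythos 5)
Your proposal is correct and matches the paper's own argument, which is exactly the observation that the proof of Theorem~\ref{thm:P6free} never invokes the presence or absence of an edge between $v_1$ and $v_6$ (only the non-adjacencies $v_1v_4$, $v_2v_5$, $v_3v_6$ and the cycle edges), so it applies verbatim to an induced $C_6$. Your additional check that each fork uses only four consecutive cycle vertices, and that the final (N3)-violation at $v_3,v_5$ is unaffected by the $v_1$--$v_6$ edge, is a careful fleshing-out of the same reasoning.
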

\begin{remark}
\label{remA13052024}
{\emph{ 2-qBMGs are bipartite digraphs; therefore, they cannot induce cycles of an odd length. 
In particular, they are $C_5$-free and $C_3$-free. On the other hand, 2-qBMGs with paths and cycles of length $\ell<5$ exist. For $\ell=4$, consider the digraph $\dG$ on $4$ vertices $\{v_1,v_2,v_3,v_4\}$ and edge-set $E(\dG)=\{v_2v_1,v_2v_3,v_3v_4\}$. The underlying graph of $\dG$ is a path of length $4$, and $\dG$ is a 2-qBMG; indeed it is explained by $(T,\sigma_p, \tau)$, where $T$ is represented in Figure~\ref{fig:P4-P3}(a), $\sigma_p$ is the leaf-coloring map defined by the parity of the leaf labels and $\tau$ is the sink-truncation maps of $G$. For $l=3$, consider the digraph $\dG$ on $3$ vertices $\{v_1,v_2,v_3\}$ and edge-set$\dG=\{v_2v_1,v_2v_3\}$ is a 2-qBMG explained by $(T,\sigma_p, \tau)$, where $T$ is in Figure~\ref{fig:P4-P3}(b). The underlying graph of $\dG$ is a path of length $3$.}}

\begin{figure}[ht]
\centering
\scalebox{0.1}
{\includegraphics{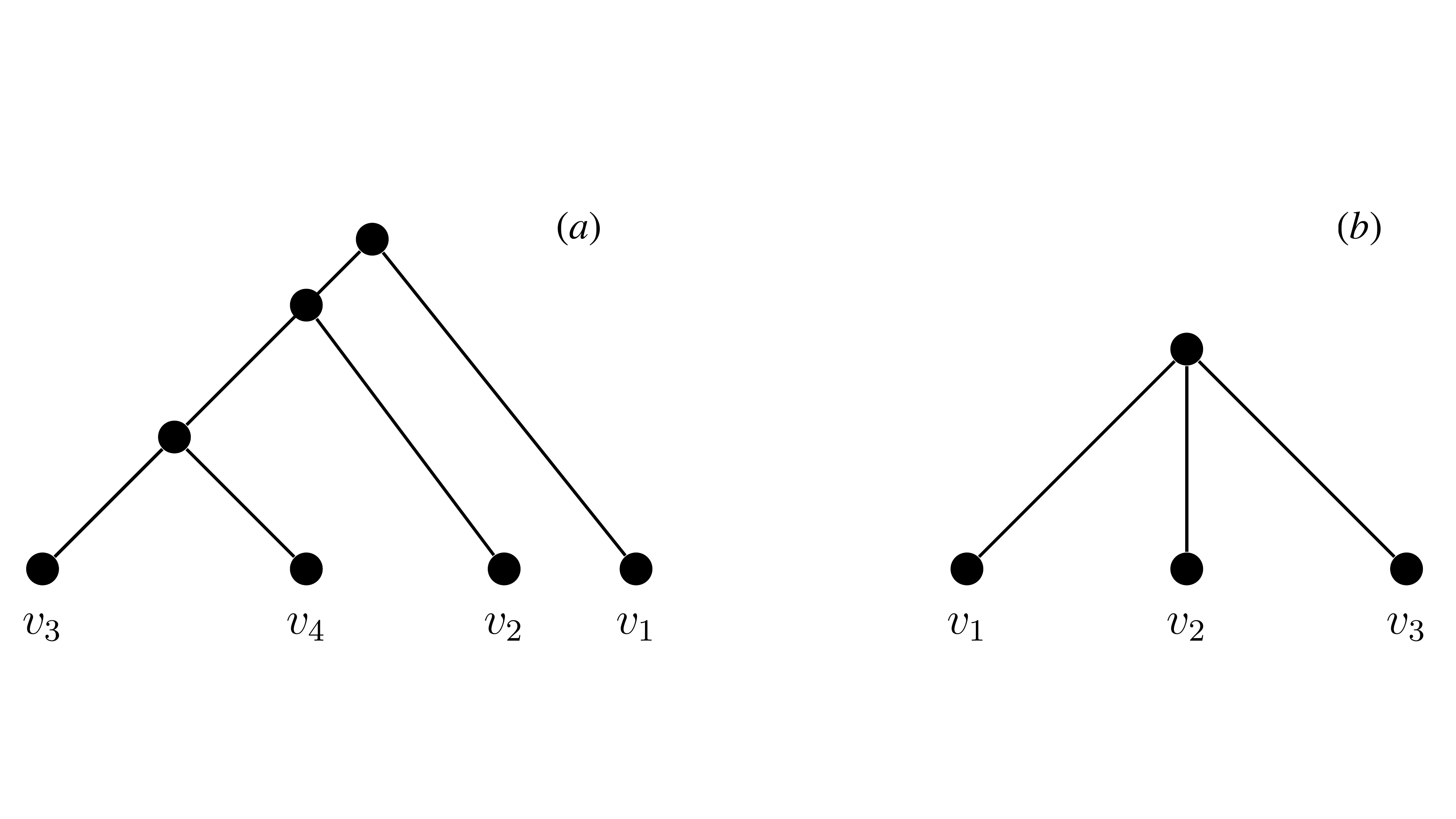}}
\caption{(a) Tree topology explaining a 2-qBMG with an induced $P_4$. (b) Tree topology explaining a 2-qBMG with an induced $P_3$.}
\label{fig:P4-P3}
\end{figure}
\end{remark}
The arguments used in the proof of Theorem~\ref{thm:P5} can be adapted to prove the following theorem. 

\begin{theorem} \label{thm:C4-cases}
There are exactly 10 non-isomorphic 2-qBMGs on four vertices $\{v_1,v_2,v_3,v_4\}$ whose underlying undirected graph is a $C_4$. They are the following ten digraphs: 

\noindent
$E(\overrightarrow{P}_4^{(1)})=\{v_1v_2,v_1v_4,v_2v_1,v_2v_3,v_3v_4,v_4v_3\}$,\\
$E(\overrightarrow{P}_4^{(2)})=\{v_1v_2,v_1v_4,v_2v_3,v_3v_2,v_3v_4,v_4v_3\},$\\
$E(\overrightarrow{P}_4^{(3)})=\{v_1v_2,v_1v_4,v_3v_2,v_3v_4\},$\\
$E(\overrightarrow{P}_4^{(4)})=\{v_1v_2,v_1v_4,v_3v_2,v_4v_3\},$\\
$E(\overrightarrow{P}_4^{(5)})=\{v_1v_2,v_2v_1,v_3v_2,v_3v_4,v_4v_1\},$\\
$E(\overrightarrow{P}_4^{(6)})=\{v_1v_2,v_1v_4,v_3v_2,v_4v_1,v_4v_3\}, $\\
$E(\overrightarrow{P}_4^{(7)})= \{v_1v_2,v_1v_4,v_2v_3,v_4v_3\},$\\
$E(\overrightarrow{P}_4^{(8)})=\{v_1v_2,v_1v_4,v_3v_2,v_3v_4,v_4v_3\},$\\
$E(\overrightarrow{P}_4^{(9)})=\{v_1v_2,v_1v_4,v_2v_1,v_2v_3,v_3v_2,v_3v_4\},$\\
$E(\overrightarrow{P}_4^{(10)})=\{v_1v_2,v_1v_4,v_3v_2,v_3v_4,v_2v_1,v_2v_3,v_4v_1,v_4v_3\}.$\\
\end{theorem}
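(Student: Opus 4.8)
The plan is to enumerate, up to isomorphism, all bipartite digraphs $\dG$ on vertex set $\{v_1,v_2,v_3,v_4\}$ whose underlying undirected graph is the $4$-cycle $C_4$ with edges $v_1v_2,v_2v_3,v_3v_4,v_4v_1$, and then to check which of these satisfy the three defining conditions (N1), (N2), (N3) of a 2-qBMG. The color classes are forced to be $\{v_1,v_3\}$ and $\{v_2,v_4\}$. Each of the four undirected edges of the cycle can be oriented in one of three ways (one direction, the other, or symmetric), so there are $3^4=81$ labelled candidates; after quotienting by the automorphism group of $C_4$ (the dihedral group of order $8$) one is left with a small list to test, and the claim is that exactly ten survive.

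First I would fix notation and reduce the automorphism bookkeeping: the dihedral group acting on $C_4$ restricted to preserving the bipartition is generated by the rotation $v_1\to v_2\to v_3\to v_4\to v_1$ and the reflection swapping $v_2\leftrightarrow v_4$ (fixing $v_1,v_3$). Then I would organize the case analysis by the number $s\in\{0,1,2,3,4\}$ of symmetric edges and, within each value of $s$, by which edges are symmetric and how the remaining edges are oriented. For each candidate I would verify (N1), (N2), (N3) directly. The key structural observations that make this fast are: (N2) bi-transitivity forbids a directed $3$-path $v_iv_jv_kv_\ell$ whose endpoints are non-adjacent — in $C_4$ the endpoints $v_i$ and $v_\ell$ of any path of length $3$ are non-adjacent (same color), so \emph{no orientation may contain a directed $3$-path}; this immediately kills many candidates. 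Similarly, an (N1)-configuration $[v_i,v_j,v_k,v_\ell]$ needs edges $v_iv_j,v_jv_k,v_\ell v_k$ with $v_i,v_\ell$ adjacent; in $C_4$ this again corresponds to forbidden orientation patterns around the cycle. Condition (N3) constrains pairs of vertices of the same color that share an out-neighbor: for $v_1,v_3$ (resp.\ $v_2,v_4$) sharing out-neighbor $v_2$ or $v_4$ (resp.\ $v_1$ or $v_3$), one out-neighborhood must contain the other.

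The main obstacle I expect is purely the volume and error-proneness of the casework: with up to $81$ labelled digraphs it is easy to miss a case or to double-count isomorphism classes, and one must be careful that the stated representatives (the ten edge-sets $E(\overrightarrow{P}_4^{(1)}),\dots,E(\overrightarrow{P}_4^{(10)})$) are pairwise non-isomorphic and that every surviving candidate is isomorphic to exactly one of them. To control this I would proceed exactly as in the proof of Theorem~\ref{thm:P5}: start from the ``most oriented'' configurations (few or no symmetric edges) satisfying (N1)–(N3), then close under the operation of adding a reverse edge to create a symmetric edge, checking at each step that (N2) (the binding constraint, since adding a symmetric edge can create a new directed $3$-path) is not violated; this generates the whole list with built-in non-redundancy. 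Finally I would confirm that no candidate outside this closure is a 2-qBMG by showing any such digraph contains a directed $3$-path with non-adjacent endpoints or an (N1)-configuration, and I would double-check the count $10$ against the isomorphism-type census of the $81$ orientations. Since this is a mechanical case-by-case analysis ``similar but adapted'' to Theorem~\ref{thm:P5}, I would present only the skeleton of the argument and the resulting list rather than every verification.
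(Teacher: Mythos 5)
Your overall strategy (enumerate the $3^4=81$ orientations of $C_4$, quotient by $\mathrm{Aut}(C_4)$, and test (N1)--(N3)) is viable, and the paper itself only gestures at such a case analysis. But the ``key structural observation'' you plan to use to prune the search is false, and it would wreck the count. You claim that in $C_4$ the endpoints of any path of length $3$ are non-adjacent because they have the same color, so that (N2) forbids every directed $3$-path. In fact a $3$-edge path $v_iv_jv_kv_\ell$ on four distinct vertices of $C_4$ has its endpoints in \emph{different} color classes (three steps in a bipartite graph change the class), and since $C_4\cong K_{2,2}$ those endpoints are adjacent via the fourth edge of the cycle. Hence (N2) does not forbid directed $3$-paths; it merely forces the closing edge $v_iv_\ell$ to be present. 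Concretely, $\overrightarrow{P}_5$--style pruning would wrongly discard $\overrightarrow{P}_4^{(5)}$ (which contains the directed path $v_3v_4v_1v_2$ closed by $v_3v_2$), $\overrightarrow{P}_4^{(10)}$ (all edges symmetric), and in fact eight of the ten digraphs in the statement: only $\overrightarrow{P}_4^{(3)}$ and $\overrightarrow{P}_4^{(7)}$ contain no directed $3$-path. Your plan as written would therefore output $2$ classes, not $10$. This is precisely the point where the $C_4$ case differs from the $P_5$ and $P_6$ cases of Theorems~\ref{thm:P5} and~\ref{thm:P6free}, where the endpoints of the relevant $3$-paths really are non-adjacent (the path-graphs are chordless), so the analogy you invoke breaks down at the crucial step.

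A second, less damaging inaccuracy: your reading of (N1) as imposing ``forbidden orientation patterns around the cycle'' is also off. In any (N1)-pattern $ut,tw,vw\in E(\dG)$ the vertices $u$ and $v$ lie in different color classes, hence are adjacent in $K_{2,2}$ and so are never independent; thus (N1) is vacuous for every orientation of $C_4$. Likewise (N3) is automatic, since the out-neighborhoods of two same-colored vertices are subsets of a two-element set, and two such subsets that intersect are always comparable. The correct reduction is therefore: a bipartite digraph on $C_4$ is a 2-qBMG if and only if, whenever three consecutive edges of the cycle all carry a coherent direction, the fourth edge carries the direction closing that path. Carrying out the count with this corrected condition does yield exactly $51$ labelled digraphs falling into the $10$ isomorphism classes listed, so the theorem is fine --- but your proposal needs the (N2) criterion repaired before the enumeration can be trusted.
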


\section{Dominating sets in 2-qBMGs}
As pointed out in the introduction, Theorem~\ref{thm:P6free} and Theorem~\ref{thm:C6free}, together with previous results, give new contributions to the current studies on the structure of 2-qBMGs. This section is focused on dominating biclique sets.

Recall that a connected 2-qBMG is of type (A) if its undirected underlying graph has a vertex decomposition into a biclique and a stable set. It may be observed that such a biclique is necessarily a dominating set.
\begin{example} {\emph{The digraph on $10$ vertices $\{v_1,v_2, \ldots, v_{10}\}$ with edge-set 
\begin{align*}
E(\dG)=\{&v_1v_5, v_1v_6, v_1v_7, v_1v_8, v_5v_2, v_6v_2, v_2v_7, v_2v_8,\\
       &v_5v_3, v_6v_3, v_3v_7, v_3v_8, v_5v_4, v_6v_4, v_7v_4, v_4v_8, \\
       &v_5v_9, v_1v_{10}, v_2v_{10}\}
\end{align*}
is a 2-qBMG of type (A) whose underlying undirected graph $G$ has an induced dominating subgraph with vertex set $\{v_1,v_2,v_3,v_4,v_5,v_6,v_7,v_8\}$ and stable set $\{v_9,v_{10}\}$.}}
\end{example}

\begin{theorem}
\label{thm:connected-typeA}
Every connected 2-qBMG has a vertex decomposition into connected 2-qBMGs of type (A).
\end{theorem}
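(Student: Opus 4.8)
The plan is to construct the decomposition by induction on $|V(\dG)|$. Here a \emph{vertex decomposition of $\dG$ into connected 2-qBMGs of type (A)} is read as a partition $V(\dG)=V_1\cup\cdots\cup V_k$ such that every $\dG[V_i]$ is a connected 2-qBMG of type (A). If $\dG$ is itself of type (A) — in particular whenever $|V(\dG)|\le 1$, since a one-vertex digraph is a degenerate, hence type (A), 2-qBMG — the trivial partition $\{V(\dG)\}$ already works. So from now on one may assume that $\dG$ is connected, has at least two vertices, and is not of type (A); being connected with at least two vertices, its underlying undirected graph $G$ has an edge.

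The key step is to peel off a single type-(A) piece using a dominating biclique. By Theorem~\ref{thm:P6free} and Theorem~\ref{thm:C6free}, $G$ is a connected, bipartite, $P_6$- and $C_6$-free graph, so by the Liu--Zhou theorem \cite[Theorem 1]{liu1994dominating} (constructively, \cite{van2010new}) $G$ has a dominating biclique $K$. Since $G$ has an edge, $K$ can be chosen with both colour classes non-empty: if a single vertex $v$ dominates $G$, replace it by $\{v,w\}$ for any neighbour $w$ of $v$, which still dominates $G$ and induces a $K_{1,1}$. Now let $S$ be a maximal independent set of the induced subgraph $G[V(G)\setminus V(K)]$ and put $W:=V(K)\cup S$. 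Then $G[W]$ is the edge-union on $W$ of the complete bipartite graph on $V(K)$, the stable set $S$, and the edges between them, hence $G[W]$ is a $K\oplus S$ graph; it is connected because the biclique on $V(K)$ is connected (both colour classes non-empty) and every vertex of $S$, lying outside the dominating set $V(K)$, has a neighbour in $V(K)$. As the conditions (N1)--(N3) are inherited by induced subgraphs, 2-qBMGs form a hereditary class (cf.\ \cite{korchmaros2023quasi}), so $\dG[W]$ is a 2-qBMG and therefore a connected 2-qBMG of type (A). Finally $W\neq V(\dG)$, for otherwise $G=G[W]$ would be a $K\oplus S$ graph and $\dG$ would be of type (A), contrary to assumption.

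It then remains to recurse on $\dG-W$. Each of its connected components is an induced subgraph of $\dG$, hence a 2-qBMG by heredity, is connected by construction, and has strictly fewer than $|V(\dG)|$ vertices because $W\neq\emptyset$. By the induction hypothesis each component admits a partition into connected 2-qBMGs of type (A); adding the single class $W$ to the union of all these partitions produces the desired partition of $V(\dG)$. Iterating the construction yields a ``layered'' decomposition, obtained by repeatedly deleting a dominating biclique together with a maximal independent set of the remainder from each current component; since such a dominating biclique is found in polynomial time by \cite{van2010new}, the whole decomposition is computable in polynomial time.

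I expect no serious obstacle: the argument is essentially a packaging of the Liu--Zhou dominating-biclique theorem with the heredity of the class of 2-qBMGs. The routine verifications are that a complete bipartite induced sub-digraph with both sides non-empty is a connected 2-qBMG of type (A); that adjoining a maximal independent set of the complement preserves both type (A) and connectedness (the biclique remains dominating); and that deleting $W$ leaves connected components which are again 2-qBMGs with strictly fewer vertices, so the induction is well founded. The single point that must be used with care is that the dominating-biclique theorem requires $G$ to be \emph{both} $P_6$-free and $C_6$-free — precisely what Theorems~\ref{thm:P6free} and~\ref{thm:C6free} guarantee for the underlying undirected graph of a 2-qBMG — since $P_6$-freeness alone would not ensure a dominating biclique in every connected induced subgraph met along the recursion.
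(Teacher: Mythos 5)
Your proof is correct and follows essentially the same strategy as the paper's: invoke Theorems~\ref{thm:P6free} and~\ref{thm:C6free} to apply the Liu--Zhou dominating-biclique theorem, attach a stable set to the biclique to form a type-(A) piece (using heredity of 2-qBMGs), and recurse on the connected components of the remainder. The only difference is the choice of the stable set --- you take a maximal independent set of the complement, while the paper takes the vertices whose entire neighborhood lies in the biclique --- and your version is if anything more careful about the well-foundedness of the induction and the degenerate single-vertex components.
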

\begin{proof}
Let $\overrightarrow{G}\,$ be a connected 2-qBMG not of type (A).
Let $G$ be the underlying undirected graph of $\overrightarrow{G}$, where $U$ and
$W$ are its color classes. 
As mentioned in the introduction, from~\cite[Theorem 1]{liu1994dominating}, $G$ contains a dominating biclique $\Delta$ with vertex set $T\cup Z$ where $T\subseteq U$ and $Z\subseteq W$. Let $\overrightarrow{\Delta}$ be the directed subgraph of $\dG$ induced by $\Delta$. 
Let $S$ be the subset of $V(G)$ consisting of all vertices $v\in V(G)$ for which $N^+(v)\cup N^-(v)$ is contained in $T\cup Z$. Let $\overrightarrow{\Sigma}$ be the induced subgraph of $\overrightarrow{G}$ on $T\cup Z\cup S$. Since $\Delta$ is a dominating set of $G$, $\overrightarrow{\Sigma}$ is connected without isolated vertex and is a connected 2-qBMG. The underlying undirected graph $\Sigma$ of $\overrightarrow{\Sigma}$ is a $K\oplus S$ graph, where $K=T\cup Z$.

Furthermore, let $\overrightarrow{G}_1$ be the induced subgraph of $\overrightarrow{G}$ on $V(G)\setminus (T\cup Z\cup S)$. By construction, no vertex of $\overrightarrow{G}_1$ is isolated, although some vertex of $\overrightarrow{G}$ may become a sink in $\overrightarrow{G}_1$, and non-equivalent vertices $\overrightarrow{G}$ may become equivalent in $\overrightarrow{G}_1$. If $G_1$ is not a connected 2-qBMG of type (A), we can repeat the above construction to define a finite sequence $\Sigma_1,G_2, \Sigma_2\, \ldots,\Sigma_k$ such that $\overrightarrow{\Sigma}, \overrightarrow{\Sigma}_1,\overrightarrow{\Sigma}_2,\ldots, \overrightarrow{\Sigma}_k$ form a vertex partition of $\overrightarrow{G}$ into connected 2-qBMGs of type (A).

If some of the members of the subsequence $G_1,G_2,\ldots,G_{k-1}$ decomposition is not connected, say $G_m$, applying the above argument on each of the connected components of $G_m$ gives the claimed partition.
\end{proof} 

\begin{proposition}
    \label{prop:delta_bmg} 
Let $\dG$ be a 2-qBMG satisfying at least one of conditions ($*$) and ($**$).
Fix an orientation $\overrightarrow{\Gamma}$ of $\dG$.
Let $\overrightarrow{\Delta}$ be an induced subgraph of $\,\,\overrightarrow{\Gamma}$ such that
the underlying undirected graph $\Delta$ is a biclique of $G$. If $\dG$ has no symmetric edges in $\overrightarrow{\Delta}$ then $\overrightarrow{\Delta}$ is a 2-qBMG.
\end{proposition}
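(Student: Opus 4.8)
The plan is to reduce the statement to the hereditary behaviour of the class of 2-qBMGs. The crucial observation is that, although $\overrightarrow{\Delta}$ is presented as an induced subgraph of the orientation $\dGa$, the hypothesis that $\dG$ has no symmetric edge with both endpoints in $V(\overrightarrow{\Delta})$ forces $\overrightarrow{\Delta}$ to be an \emph{honest} induced subdigraph of $\dG$. So I would first spell this out: by definition $\dGa$ is obtained from $\dG$ by deleting, from each symmetric edge $\{u,v\}$ of $\dG$, exactly one of the arcs $uv,vu$, while no arc between a non-symmetric pair is ever deleted; since there is no symmetric edge of $\dG$ inside $V(\overrightarrow{\Delta})$, nothing between two vertices of $V(\overrightarrow{\Delta})$ is deleted, so $E(\overrightarrow{\Delta})=\{ab\in E(\dG):a,b\in V(\overrightarrow{\Delta})\}$; that is, $\overrightarrow{\Delta}$ is exactly the subdigraph of $\dG$ induced by $V(\overrightarrow{\Delta})$.

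Then I would conclude that $\overrightarrow{\Delta}$ is a 2-qBMG. It is bipartite, loopless and without parallel edges, being an induced subdigraph of $\dG$; so it remains to verify (N1), (N2) and (N3), and each of these localises to induced subdigraphs. For (N2): if $uv,vw,wt\in E(\overrightarrow{\Delta})$ then these are arcs of $\dG$, so $ut\in E(\dG)$, whence $ut\in E(\overrightarrow{\Delta})$ since $u,t\in V(\overrightarrow{\Delta})$. For (N1): two vertices of $\overrightarrow{\Delta}$ are independent in $\overrightarrow{\Delta}$ if and only if they are independent in $\dG$, so an (N1)-configuration inside $\overrightarrow{\Delta}$ is one inside $\dG$. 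For (N3): the out-neighbourhood of a vertex of $\overrightarrow{\Delta}$ is the intersection of its out-neighbourhood in $\dG$ with $V(\overrightarrow{\Delta})$, so a common out-neighbour in $\overrightarrow{\Delta}$ is a common out-neighbour in $\dG$, and an inclusion of out-neighbourhoods valid in $\dG$ restricts to one in $\overrightarrow{\Delta}$. Equivalently, one may simply invoke that the class of 2-qBMGs is hereditary~\cite{korchmaros2023quasi}, which is where I would point for the cleanest write-up.

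It is also worth recording what the remaining hypotheses buy: by \cite[Lemma 2.2]{korchmaros2021structure} and \cite[Theorem 3.8]{korchmaros2021structure} the orientation $\dGa$, and hence $\overrightarrow{\Delta}$, is acyclic; and since $\Delta$ is a biclique of $G$ with no symmetric edge of $\dG$ inside $V(\overrightarrow{\Delta})$, between any two differently coloured vertices of $\overrightarrow{\Delta}$ there is exactly one arc, so $\overrightarrow{\Delta}$ is a bitournament. An acyclic bitournament is automatically bi-transitive --- a path $uv,vw,wt$ together with the only bitournament alternative $tu$ to the arc $ut$ would close a directed $4$-cycle on the four distinct vertices $u,v,w,t$ --- so by \cite[Proposition 3.9]{korchmaros2021structure} $\overrightarrow{\Delta}$ is in fact an odd--even digraph. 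The main obstacle in all of this is not deep: it is just to be scrupulous that ``no symmetric edges in $\overrightarrow{\Delta}$'' really makes the orientation irrelevant on $V(\overrightarrow{\Delta})$, and --- unless one quotes heredity outright --- that each of (N1), (N2), (N3) is inherited by induced subdigraphs, the only non-automatic points being that independence and having a common out-neighbour are themselves inherited.
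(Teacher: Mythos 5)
Your proof is correct, and it takes a genuinely different and more economical route than the paper's. The paper verifies (N1)--(N3) for $\overrightarrow{\Delta}$ directly: it uses ($*$) or ($**$) to get a topological ordering of $\overrightarrow{\Gamma}$, observes that the biclique hypothesis together with the absence of symmetric edges makes $\overrightarrow{\Delta}$ a bitournament (so (N1) is vacuous and (N2) follows from the ordering), and then uses the no-symmetric-edge hypothesis once more to push (N3) from $\dG$ down to $\overrightarrow{\Delta}$. You instead make the single observation that, since an orientation only deletes arcs from symmetric pairs and no symmetric pair of $\dG$ lies inside $V(\overrightarrow{\Delta})$, the digraph $\overrightarrow{\Delta}$ coincides with the subdigraph of $\dG$ induced by $V(\overrightarrow{\Delta})$, and then you invoke (or re-verify) heredity of the class of 2-qBMGs, which the paper itself records with a citation. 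This is cleaner and, notably, shows that the hypotheses ($*$)/($**$) and ``$\Delta$ is a biclique'' are not needed for the bare conclusion that $\overrightarrow{\Delta}$ is a 2-qBMG --- they only buy the extra structure you correctly record at the end (acyclicity, the bitournament property, and hence the odd--even description via \cite[Proposition 3.9]{korchmaros2021structure}), which is presumably why the paper states the proposition in this restricted form. The one point worth being explicit about in a final write-up, and which you do address, is the reading of ``no symmetric edges in $\overrightarrow{\Delta}$'' as ``no symmetric edge of $\dG$ has both endpoints in $V(\overrightarrow{\Delta})$''; your whole reduction rests on that interpretation, which is also the one the paper's own (N3) step uses.
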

\begin{proof} 
Let $n$ be the number of the vertices of $\dG$ (and of $\overrightarrow{\Gamma}$).  As pointed out in Section~\ref{bg}, if either ($*$) or ($**$) holds then 
$\overrightarrow{\Gamma}$ has a topological ordering. Therefore, the vertices of $\overrightarrow{\Gamma}$  can be labelled with $\{v_1,v_2,\ldots, v_n\}$ such that $v_iv_j\in E(\overrightarrow{\Gamma})$ implies $i<j$ for $1\le i,j \le n$.
In particular, for $v_i,v_j\in V(\Delta)$ of different color, either $v_iv_j$ or $v_jv_i$ is an edge of $\overrightarrow{\Delta}$. Therefore, (N1) trivially holds for $\overrightarrow{\Delta}$ while (N2) follows from the transitivity of the ordering. To show (N3), take $v_i,v_j\in V(\Delta)$ of the same color such that $v_i$ and $v_j$ have a common out-neighbour $w\in \overrightarrow{\Delta}$. Then $w$ is a common out-neighbour of $v_i$ and $v_j$ in $\dG$. From (N3) applied to $\dG$, all out-neighbours of $v_i$ in $\dG$ are also out-neighbours of $v_j$ (or, vice versa). Now take any vertex $v$ such that $v_iv\in E(\overrightarrow{\Delta})$. Then $v_iv\in E(\dG)$, and hence $v_jv\in E(\dG)$. Since $v_jv$ is not a symmetric edge in $\dG$, this yields  $v_jv\in E(\overrightarrow{\Gamma})$ whence $v_jv\in E(\overrightarrow{\Delta}$) follows. Thus $\overrightarrow{\Delta}$ is a 2-qBMG.
\end{proof}
\begin{theorem}
Let $\dG$ be a connected 2-qBMG satisfying ($*$). Then $\dG$ has an even-odd subgraph $(\mathfrak{A}, \mathfrak{O})$ such that the underlying undirected subgraph of $(\mathfrak{A}, \mathfrak{O})$ is a dominating biclique of $G$.
\end{theorem}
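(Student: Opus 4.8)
The plan is to combine the Liu--Zhou theorem with the acyclic-orientation machinery that, by the excerpt, is available for 2-qBMGs under condition ($*$).

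First I would record that the underlying undirected graph $G$ of $\dG$ is bipartite and connected, and that by Theorem~\ref{thm:P6free} and Theorem~\ref{thm:C6free} it is both $P_6$-free and $C_6$-free. Hence $G$, viewed as a connected induced subgraph of itself, satisfies the hypothesis of \cite[Theorem~1]{liu1994dominating}, which yields a dominating biclique $\Delta$ of $G$; write $T\subseteq U$ and $Z\subseteq W$ for its color classes, so that $\Delta$ is the complete bipartite graph on $(T,Z)$ and $T\cup Z$ dominates $G$. (If $\dG$ has only one vertex the statement is trivial, so one may assume $|V(\dG)|\ge 2$ and hence $T,Z\neq\emptyset$.)

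Next, since $\dG$ satisfies ($*$), by \cite[Lemma~2.2]{korchmaros2021structure} and \cite[Theorem~3.8]{korchmaros2021structure} every orientation of $\dG$ is acyclic; fix one such orientation $\overrightarrow{\Gamma}$. Being acyclic and oriented, $\overrightarrow{\Gamma}$ is an even--odd digraph, as recalled in Section~\ref{bg}. Let $\overrightarrow{\Delta}$ be the subdigraph of $\overrightarrow{\Gamma}$ induced by $T\cup Z$. Three points then finish the argument: (i) as an induced subdigraph of the acyclic oriented digraph $\overrightarrow{\Gamma}$, the digraph $\overrightarrow{\Delta}$ is again acyclic and oriented, hence an even--odd digraph $(\mathfrak{A},\mathfrak{O})$; (ii) an orientation of $\dG$ retains one arc of each edge, so $E(\overrightarrow{\Gamma})\subseteq E(\dG)$ and therefore $\overrightarrow{\Delta}$ is a subgraph of $\dG$; (iii) since $\Delta$ is a complete bipartite subgraph of $G$ and $\overrightarrow{\Gamma}$ keeps exactly one arc for every edge of $G$, the underlying undirected graph of $\overrightarrow{\Delta}$ is precisely the complete bipartite graph on $(T,Z)$, that is $\Delta$ itself, which dominates $G$. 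Thus $\overrightarrow{\Delta}$ is the required even--odd subgraph.

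The point that needs care---and the reason one cannot simply invoke Proposition~\ref{prop:delta_bmg}---is that a dominating biclique of $G$ need not induce a symmetric-edge-free subdigraph of $\dG$: for instance, a single symmetric edge of $\dG$ is itself a dominating biclique whose induced subdigraph in $\dG$ does contain a symmetric edge. The way around this is exactly steps (ii)--(iii) above: one first passes to the orientation $\overrightarrow{\Gamma}$ and extracts $\overrightarrow{\Delta}$ inside $\overrightarrow{\Gamma}$, using ($*$) only to guarantee that $\overrightarrow{\Gamma}$---and hence every induced subdigraph of it---is acyclic and therefore automatically even--odd; no verification of the 2-qBMG axioms inside $\overrightarrow{\Delta}$ is then needed. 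If in addition one wanted $\overrightarrow{\Delta}$ to be an induced subdigraph of $\dG$, and hence a 2-qBMG by Proposition~\ref{prop:delta_bmg}, one would have to choose $\Delta$ so that $\dG[T\cup Z]$ has no symmetric edge; here the fact that ($*$) forces the symmetric edges of $\dG$ to form a matching is the natural tool, but deleting a matched vertex may destroy domination (the ``private neighbour'' obstruction), so this strengthening is genuinely harder and is not required for the statement as given.
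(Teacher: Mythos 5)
Your proof follows the paper's own route almost exactly: apply the Liu--Zhou theorem (justified via Theorems~\ref{thm:P6free} and~\ref{thm:C6free}) to obtain a dominating biclique $\Delta$ of $G$ with classes $T\subseteq U$, $Z\subseteq W$; fix an orientation $\overrightarrow{\Gamma}$ of $\dG$; and take for the required subgraph the restriction $\overrightarrow{\Delta}$ of $\overrightarrow{\Gamma}$ to $T\cup Z$ --- including the observation, which the paper also makes explicitly, that $\overrightarrow{\Delta}$ need not be an induced subdigraph of $\dG$ because of symmetric edges inside the biclique, so Proposition~\ref{prop:delta_bmg} cannot simply be invoked. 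The one point of divergence is the last step: you conclude that $\overrightarrow{\Delta}$ is odd--even from acyclicity of $\overrightarrow{\Gamma}$ alone, leaning on the uncited background sentence that acyclic-oriented digraphs are odd--even, whereas the paper notes that, $\Delta$ being a biclique, every cross-pair of $T\cup Z$ carries exactly one arc of $\overrightarrow{\Gamma}$, so $\overrightarrow{\Delta}$ is a \emph{bi-transitive bitournament} and hence odd--even by the cited \cite[Proposition~3.9]{korchmaros2021structure}. The paper's justification is the safer one --- the blanket claim you use is broader than the cited result and at minimum requires bipartiteness, while the bitournament property is exactly what the biclique structure gives you for free --- so I would replace your step (i) by that observation; everything else (the domination, the identification of the underlying graph of $\overrightarrow{\Delta}$ with $\Delta$, and the role of condition ($*$)) matches the paper's argument.
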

\begin{proof}  Fix an orientation $\overrightarrow{\Gamma}$ of $\overrightarrow{G}$ by keeping the same vertex set but retaining exactly one edge from each symmetric edge. Let $G$ be the underlying undirected graph of $\overrightarrow{G}$. Then $G$ coincides with the underlying undirected graph of $\overrightarrow{\Gamma}$. As mentioned in Introduction, from~\cite[Theorem 1]{liu1994dominating}, $G$ contains a dominating biclique $\Delta$ with vertex set $T\cup Z$ where $T\subseteq U$ and $Z\subseteq W$  where $U$ and $W$ are the bipartion classes of $G$. Let $\overrightarrow{\Delta}$ be the directed subgraph on vertex-set $T\cup Z$ where $v_iv_j\in E(\overrightarrow{\Delta})$ if and only if 
$v_iv_j$ is an edge of $\overrightarrow{\Gamma}$.
It should be noticed that $\overrightarrow{\Delta}$ may not be the induced subgraph of $\overrightarrow{G}$ on the vertex-set $T\cup Z$. This occurs indeed when $\overrightarrow{G}$ contains a symmetric edge with both endpoints in $T\cup Z$. Nevertheless, $\overrightarrow{\Delta}$ is bitournament. Since $\overrightarrow{\Delta}$ is bitransitive, it is an even-odd digraph, as recalled in Section~\ref{bg}.
\end{proof}

\section{Future directions} 
The $P_6$-and $C_6$-freeness in the undirected underlying 2-qBMGs raises the problem of finding more such forbidden subgraphs on six (or eight) vertices. In this direction, the domino graph on six vertices (i.e. the Cartesian product $P_2 \times P_3$) and its generalization on eight vertices, the long-domino or ladder graph, appear worth investigating.      

In~\cite{quaddoura2024bipartite}, bipartite $P_6$ and $C_6$-free graphs are characterized as those graphs where every connected subgraph is of type $K\oplus S$. This gives rise to a linear time algorithm for recognizing if an undirected graph is $P_6$ and $C_6$-free by tracing all $K\oplus S$ decompositions. It would be interesting to investigate whether a similar approach can be used to perform a linear time algorithm to recognize if a digraph is a 2-qBMG. The first step is to check whether the converse of ~\cite[Theorem 2]{quaddoura2024bipartite} holds.

The s-dim of a graph $G$ is the minimum number of bicliques needed to cover the edge-set of $G$. Computing the s-dim (biclique cover problem) is NP-complete for bipartite graphs~\cite{dawande2001biclique}; however, this problem is linear for some classes of graphs, such as bipartite domino-free graphs~\cite{amilhastre1998complexity}. Investigating the complexity of the biclique cover problem for the family of underlying undirected graphs of 2-qBMGs is also an interesting topic for future investigation.

For two disjoint subsets $W$ and $T$ of vertices, $W$ \emph{2-dominates} $T$ if every vertex of $T$ is
 adjacent to at least two vertices of $W$. A partition $\{V_1,V_2, \ldots,V_n\}$ of vertices in $V(G)$ into $n$ parts is a \emph{2-transitive partition} of size $n$ if $V_i$ 2-dominates for all $1 \leq i < j \leq n$. Finding a 2-transitivity partition of maximum order is NP-complete for bipartite graphs~\cite{paul2023algorithmic}. Studying the complexity of this problem for the family of underlying undirected graphs of 2-qBMGs is also an interesting topic for future investigation.

\bibliography{refs}

\end{document}